\documentclass[twoside]{article}

\usepackage{amsmath}    
\usepackage{amssymb}    
\usepackage{amsthm}     
\usepackage{bbm}
\usepackage{mathtools}  
\usepackage{hyperref}
\usepackage{cleveref}
\usepackage{subcaption}
\usepackage{booktabs}

\usepackage{bm}         
\usepackage{mathrsfs}   
\usepackage{enumitem}   
\usepackage{amsthm}
\usepackage{xcolor}
\usepackage{enumitem}
\usepackage{natbib}

\usepackage{algorithm, algorithmic}          
\usepackage{float}              

\renewcommand{\d}{\mathrm{d}}

\newtheorem{theorem}{Theorem}
\newtheorem{assumption}{Assumption}

\newtheorem{proposition}{Proposition}

\newtheorem{lemma}{Lemma}
\newtheorem{corollary}{Corollary}

\DeclareMathOperator*{\argmin}{arg~min}
\DeclareMathOperator*{\argmax}{arg~max}

\usepackage[preprint]{aistats2026}
\begin{document}

%

%

\twocolumn[

\aistatstitle{\texttt{Conformal-DRO}: Distributionally Robust Optimization with Conformalized Ambiguity Set}

\aistatsauthor{Luhao Zhang \And Shixiang Zhu}

\aistatsaddress{Johns Hopkins University \And Carnegie Mellon University} ]

\begin{abstract}
  Data-driven distributionally robust optimization (DRO) typically treats the conditional outcome law as fixed and uses ambiguity sets to capture estimation error. This paper studies latent distributional heterogeneity, where each instance has an unobserved law but contributes only one observation, so uncertainty persists even if the mixture law is known. We propose \texttt{Conformal-DRO}, which uses nested conformal regions to construct an ambiguity set for the future latent law. Under exchangeability, the set covers this law with probability at least $1-\alpha$ in finite samples, without estimating underlying latent laws or their mixing mechanism. The conformal path induces a data-driven transport geometry, while $\alpha$ determines the radius. The worst-case problem reduces to a finite linear program over conformal shells and admits sparse adversarial solutions. The resulting robust value provides a finite-sample certificate for the selected decision's expected cost.
\end{abstract}

\section{Introduction}
\label{sec:introduction}
\vspace{-.06in}

Many decisions must be made after observing contextual information but before the uncertain outcome is realized. 
A physician chooses a treatment after reviewing a patient's records \citep{mo2021learning}; a system operator schedules resources after observing current operating conditions \citep{shi2022day}. 
Data-driven distributionally robust optimization (DRO) protects such decisions by optimizing against a collection of plausible outcome distributions rather than committing to a single estimated model 
\citep{delage2010distributionally,wiesemann2014distributionally,rahimian2022frameworks}. 

The effectiveness of DRO depends critically on how this ambiguity set is constructed. 
In transport-based DRO, two choices are especially consequential: the \emph{transport geometry}, which determines the relative cost of redistributing probability mass across outcomes, and the \emph{radius}, which controls the extent of the permitted redistribution. 
A poorly chosen geometry may hedge against irrelevant perturbations, whereas an improperly calibrated radius can produce either fragile or overly conservative decisions. 
Canonical transport-based constructions typically specify a ground cost, often induced by a norm, and calibrate the radius using concentration bounds, asymptotic approximations, or validation
\citep{mohajerin2018data,blanchet2019quantifying}.

These constructions generally adopt a \emph{fixed-law} interpretation: the relevant outcome distribution is unknown but nonrandom, and the ambiguity set primarily captures the sampling error involved in estimating it. 
In contextual extensions, this amounts to associating each context $x$ with a single conditional law. 
As that law is estimated more accurately, the corresponding ambiguity set is expected to contract.

In many contextual problems, however, the distribution relevant to the next decision is itself uncertain \citep{efron2024empirical}. 
Even patients with the same recorded characteristics may have different treatment-response distributions because relevant biological factors, such as disease subtypes or drug sensitivities, remain unobserved \citep{dahabreh2016using}. 
We refer to the instance-specific outcome distribution as its \emph{latent conditional law}. 
Because only one outcome is observed from each such law, pooling observations across instances targets the \emph{mixture conditional law}, the average of their latent laws, rather than the law governing the next instance. 
Thus, even if the mixture law were known exactly, uncertainty about the next latent law would remain. 
An ambiguity set shrinking around the mixture law may therefore become increasingly precise about population-average behavior while failing to capture this instance-level heterogeneity.

This distinction changes the ambiguity-set design problem. 
Rather than accounting only for estimation error around a fixed conditional law, the ambiguity set should contain the latent law governing the next instance with a prescribed probability. 
Its geometry should encode which reallocations of probability mass are statistically compatible with the observed data, while its radius should reflect the desired confidence in covering the future law. 
Constructing such a set is challenging because the underlying latent laws are never observed, only one outcome is available from each of them, and their across-instance distribution---the \emph{mixing law}---is unknown. 
Directly estimating or comparing the latent laws is therefore impossible without substantial structural assumptions.

Our key observation is that \emph{validity guarantees in the observable outcome space provide indirect information about these unobserved laws} \citep{zheng2026beyond}. 
Conformal prediction, for example, constructs a set of plausible outcomes that contains a future outcome with a prescribed finite-sample miscoverage rate $\gamma$ under exchangeability
\citep{vovk2005algorithmic,lei2018distribution,romano2019conformalized,angelopoulos2023conformal}. 
For any candidate latent law, the probability mass it assigns outside the conformal set is precisely its conditional probability of generating an uncovered outcome. 
Averaging this ``leakage'' over future instances yields the observable conformal miscoverage probability. 
This conformal measure inversion therefore lifts outcome-level validity to a moment bound on a functional of the future latent law, without estimating either the historical latent laws or their mixing mechanism. 
The resulting normalized leakage, which we call \emph{conformal incompatibility}, quantifies how strongly a candidate law conflicts with the conformal evidence; Figure~\ref{fig:overview} illustrates this construction.

\begin{figure}[!t]
    \centering
    \includegraphics[width=\linewidth]{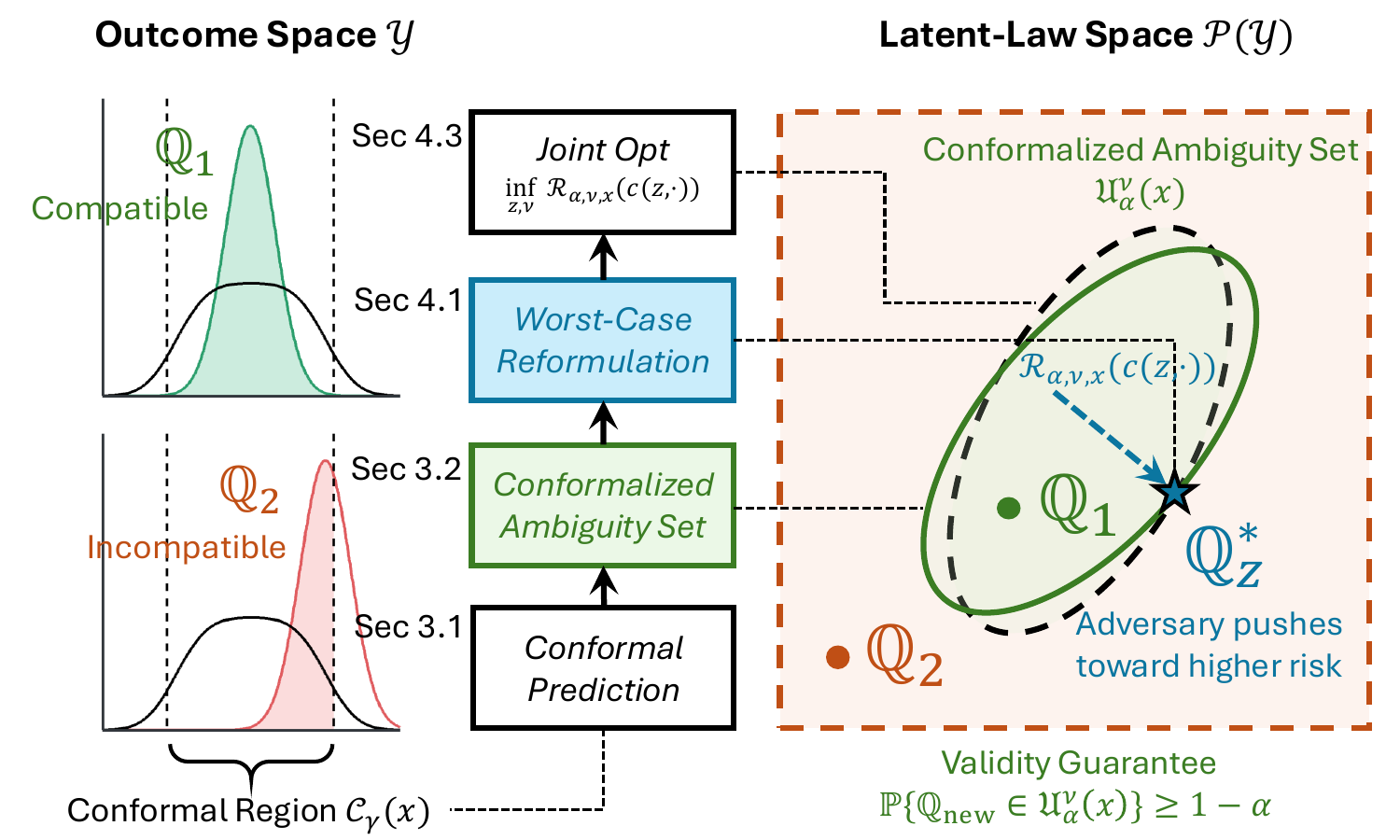}
    \caption{Overview of \texttt{Conformal-DRO}. In the outcome space, the conformal region $\mathcal C_\gamma(x)$ partitions outcomes into two subregions. Candidate latent laws are assessed by how they allocate probability mass across this partition: $\mathbb Q_1$ is retained, whereas $\mathbb Q_2$, which assigns excessive mass outside the conformal region, is excluded. This outcome-level information is lifted to the conformalized ambiguity set $\mathfrak U_\alpha^\nu(x)$ in the latent-law space. For a fixed decision $z$, the adversary selects a compatible law $\mathbb Q_z^\star$ that maximizes risk, while joint optimization chooses the decision and ambiguity-set geometry $\nu$ to minimize the resulting worst-case risk.}
    \label{fig:overview}
\end{figure}

We combine these constraints across a nested conformal path to construct a conformalized ambiguity set. 
The nested regions partition the outcome space into shells of increasing incompatibility. 
Weights assigned to different conformal levels determine the incompatibility penalty associated with each shell and can be tuned to emphasize regions that are consequential for the downstream decision. 
The conformal path and its weights determine the transport geometry, while the desired latent-law miscoverage level $\alpha$ determines the radius $1/\alpha$.
The framework therefore gives both central design choices in transport-based DRO direct statistical interpretations.

Because the incompatibility penalty is constant within each conformal shell, the adversary only needs to allocate probability mass across shells and concentrate that mass near an outcome attaining the largest loss. 
The worst-case expectation therefore reduces to a finite linear program over shell probabilities, and an optimal adversarial law can be chosen with positive mass in at most two shells. 
Thus, conformal inference supplies statistically valid law-level constraints, while the DRO objective converts them into a finite and interpretable probability-allocation problem. 

Our contributions are threefold:
($i$) We introduce a principled ambiguity-set construction for contextual DRO under latent distributional heterogeneity, in which the conformal path determines the transport geometry and the desired coverage level determines the radius.
($ii$) We adapt conformal measure inversion to distribution-valued targets, translating observable outcome-level validity into a distribution-free ambiguity set with finite-sample marginal coverage of the future latent conditional law and a corresponding expected-cost certificate.
($iii$) We characterize the computational structure of the resulting DRO problem, including its reduction to probability allocation across conformal shells, the sparsity of worst-case laws, and the joint design of decisions and ambiguity-set geometry under a resolution budget.

Our numerical experiments demonstrate that the proposed method maintains finite-sample validity while achieving competitive regret. They also show that the learned geometry adapts to different distributional shapes and that increasing its resolution yields diminishing improvements in the robust value.

\paragraph{Related Work}

Contextual stochastic optimization uses covariates to estimate conditional distributions or learn decisions directly, with distributionally robust extensions constructing context-dependent, residual-based, or transport-based ambiguity sets \citep{ban2019big,bertsimas2020predictive,elmachtoub2022smart,kannan2024residuals,esteban2022distributionally,van2022robust,yang2026decision}. Like conventional data-driven DRO \citep{delage2010distributionally,wiesemann2014distributionally,mohajerin2018data,blanchet2019quantifying}, these methods generally treat the relevant conditional law as fixed and use the ambiguity set to quantify estimation error, so the set is expected to contract with more data. We instead allow each instance to have its own latent conditional law: the observable conditional distribution averages over these laws, while the future target is a new random law drawn from their population. This complements \citet{yang2026decision}, who preserve conditional structure while perturbing a nominal joint law, whereas we model heterogeneity remaining after conditioning on the observed context. Although this hierarchical formulation resembles empirical Bayes \citep{efron2024empirical}, we estimate neither the mixing law nor a posterior; instead, we construct a conformal ambiguity set for the future random law, with a data-induced transport geometry and a radius determined by the desired coverage level.

Conformal prediction provides distribution-free, finite-sample marginal coverage under exchangeability \citep{vovk2005algorithmic,lei2018distribution,romano2019conformalized,angelopoulos2023conformal}, while optimization-oriented approaches typically embed outcome-space conformal regions into robust or constrained decision problems \citep{patel2024conformal,zheng2024generative,zhang2026topology,chen2026learning,zhu2022distributionally}. Among the closest works, \citet{chen2026learning} learns decision-tailored conformal uncertainty sets over future outcomes, whereas our uncertainty object is a latent probability law. 
\citet{zheng2026beyond} introduces a related measure-inversion principle for parametric forward models, but focuses on explicitly recovering a confidence set in parameter or distribution space, which can require searching over the full candidate space. Our approach instead uses measure inversion to induce an ambiguity set whose downstream DRO problem admits a tractable reformulation, without explicitly enumerating or constructing the set. Separately, \citet{patel2023variational} constructs conformal regions for latent parameters, while \citet{patel2024non} calibrates Wasserstein balls around amortized posterior approximations using reference posterior samples. In contrast, we observe only one outcome from each latent law, require neither a known simulator nor reference posterior samples, and estimate neither the mixing distribution nor individual latent laws. Our goal is instead a finite-sample-valid ambiguity set over probability laws tailored to downstream optimization.


\vspace{-.06in}
\section{Problem Setup}
\vspace{-.06in}

Following the standard contextual stochastic optimization framework \citep{qi2022integrating}, a decision maker observes a context $X=x$ and chooses a decision $z$ before the uncertain outcome $Y$ is realized. 
Let $\mathcal X$ and $\mathcal Y$ be standard Borel spaces, let $\mathcal P(\mathcal Y)$ denote the set of probability measures on $\mathcal Y$, and let $\mathcal Z(x)$ be the feasible decision set. Given a cost function $c(z,y)$, knowledge of the conditional outcome distribution would yield the classical contextual stochastic optimization problem
\begin{equation*}
z^\star(x)\in\operatorname*{arg\,min}_{z\in\mathcal Z(x)}\mathbb E\!\left[c(z,Y)\mid X=x\right].
\vspace{-0.15in}
\end{equation*}

We instead consider an empirical Bayes setting in which the conditional law varies across instances because of heterogeneity that remains unobserved after conditioning on $X$ \citep{efron2024empirical}. 
For each instance $i$, let $X_i \in \mathcal X$ be an observed context, $Y_i \in \mathcal Y$ an observed outcome, and $\mathbb Q_i(\cdot \mid X_i)\in\mathfrak \mathcal{P}(\mathcal{Y})$ an unobserved conditional law satisfying,
\begin{equation}
\label{eq:hierarchical-model}
    \mathbb Q_i\mid X_i\sim\pi(\cdot\mid X_i),
    \quad
    Y_i\mid(X_i,\mathbb Q_i)\sim\mathbb Q_i(\cdot\mid X_i),
\end{equation}
where the mixing kernel $\pi$ is unknown and is not estimated. 
We suppress the dependence on $X_i$ and write $\mathbb Q_i$ throughout for notational simplicity.

Our goal is to prescribe a robust decision for a future context $X_{n+1}$ using historical data $\mathcal D_n \coloneqq (X_i, Y_i)_{i=1}^n$. Because the corresponding latent law $\mathbb Q_{n+1}$ remains uncertain even after $X_{n+1}$ is observed, we address this uncertainty through data-driven DRO. Given an ambiguity set $\mathfrak U_\alpha(x;\mathcal D_n)\subseteq\mathcal P(\mathcal Y)$, we consider
\begin{equation}
\label{eq:original-dro}
\widehat z_n(x)\in\operatorname*{arg\,min}_{z\in\mathcal Z(x)}\sup_{\mathbb Q\in\mathfrak U_\alpha(x;\mathcal D_n)}\int_{\mathcal Y}c(z,y)\,\mathbb Q(\mathrm dy),
\end{equation}
where $\alpha\in(0,1)$ denotes the desired latent-law miscoverage level. We seek to construct $\mathfrak U_\alpha$ such that
$$
\mathbb P\!\left\{\mathbb Q_{n+1}\in\mathfrak U_\alpha(X_{n+1};\mathcal D_n)\right\}\geq 1-\alpha.
$$
The key challenge is that each latent law $\mathbb Q_i$ generates only one observation $Y_i$, precluding direct estimation of the individual laws or calibration of an ambiguity set in $\mathcal P(\mathcal Y)$ without additional structural assumptions.

To address this, we develop a distribution-free approach that imposes no parametric or structural assumptions on the family of latent laws or their mixing mechanism and does not attempt to estimate these distributions directly. Instead, we first use observable conformal miscoverage guarantees to construct a finite-sample-valid ambiguity set for the future latent law. Then we exploit the nested conformal path as a data-driven geometry, yielding a finite-dimensional shell reformulation that jointly optimizes the ambiguity geometry and the downstream decision.

\vspace{-.06in}
\section{Valid Ambiguity Set Construction}
\label{sec:valid-ambiguity-set}
\vspace{-.06in}

\subsection{Preliminaries: Conformal Prediction}
\vspace{-.06in}

We first establish the classical outcome-level conformal guarantee underlying our construction. Split conformal prediction begins with a fitted predictor $\widehat f$, which may be any black-box predictive model. The predictor is trained on data independent of the calibration data $\mathcal D_n$ and the next instance $(X_{n+1}, Y_{n+1})$.

\begin{assumption}[Exchangeability]
\label{ass:exchangeability}
The latent pairs 
\[
(X_1,\mathbb Q_1),\ldots,(X_{n+1},\mathbb Q_{n+1})
\]
are exchangeable. Conditional on these pairs, the outcomes are independent, with
$
Y_i \sim \mathbb{Q}_i.
$
\end{assumption}
Assumption~\ref{ass:exchangeability} implies that the pairs $(X_i,Y_i)_{i=1}^{n+1}$ are exchangeable. 
Let $s_{\widehat{f}}:\mathcal X\times\mathcal Y\to\mathbb R$ be a measurable nonconformity score and define $S_i=s_{\widehat{f}}(X_i,Y_i)$ for all $i$.
For $\gamma\in(0,1)$, define
$
    k_\gamma=\left\lceil(n+1)(1-\gamma)\right\rceil
$
and let $\widehat q_{1-\gamma}$ be the $k_\gamma$-th order statistic of $S_1,\ldots,S_n$, with $\widehat q_{1-\gamma}=+\infty$ when $k_\gamma=n+1$. The split-conformal prediction set is
\begin{equation}
\label{eq:conformal-set}
    \mathcal{C}_\gamma(x;\mathcal D_n)
    =
    \{y\in\mathcal Y:s_{\widehat{f}}(x,y)\leq\widehat q_{1-\gamma}\}.
\end{equation}

\begin{lemma}[Outcome-level validity \citep{vovk2005algorithmic}]
\label{lem:outcome-validity}
Under Assumption~\ref{ass:exchangeability}, for every fixed $\gamma\in(0,1)$,
\[
    \mathbb P\{Y_{n+1}\in\mathcal{C}_\gamma(X_{n+1})\}
    \geq 1-\gamma.
\]
\end{lemma}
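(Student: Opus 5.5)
The plan is the standard rank argument for split conformal prediction. The fitted predictor $\widehat f$ is trained on data independent of $(X_i,Y_i)_{i=1}^{n+1}$, so we may condition on it and treat $s_{\widehat f}$ as a fixed measurable function. Assumption~\ref{ass:exchangeability} makes the pairs $(X_i,Y_i)_{i=1}^{n+1}$ exchangeable, as noted after its statement: mixing over the exchangeable latent pairs and then drawing conditionally independent outcomes preserves invariance under permutations. Applying the same function $s_{\widehat f}$ to each pair, the scores $S_1,\ldots,S_{n+1}$ are exchangeable as well.

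First, dispose of the trivial case. If $k_\gamma=n+1$, then $\widehat q_{1-\gamma}=+\infty$, so $\mathcal C_\gamma(X_{n+1})=\mathcal Y$ and the coverage probability is $1$. Otherwise $k_\gamma\le n$, and the event $\{Y_{n+1}\in\mathcal C_\gamma(X_{n+1})\}$ is exactly $\{S_{n+1}\le \widehat q_{1-\gamma}\}$. The key reduction is the identity
\[
\{S_{n+1}\le S_{(k_\gamma)}^{(n)}\}=\{S_{n+1}\le S_{(k_\gamma)}^{(n+1)}\},
\]
where $S_{(k)}^{(n)}$ and $S_{(k)}^{(n+1)}$ denote order statistics of the first $n$ and of all $n+1$ scores. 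To verify it, note that adding $S_{n+1}$ to the sample can only lower the $k$-th order statistic, and it does so only when $S_{n+1}$ itself falls below the old $k$-th value. Checking both directions carefully, including ties, is the one step that needs attention.

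The event $\{S_{n+1}\le S_{(k_\gamma)}^{(n+1)}\}$ says that $S_{n+1}$ is among the $k_\gamma$ smallest values of the full sample, with ties counted favorably. Define the indicators $A_i=\{S_i\le S_{(k_\gamma)}^{(n+1)}\}$. By exchangeability, $\mathbb P(A_i)$ does not depend on $i$, because the full-sample order statistic is a symmetric function of the scores. Moreover, at least $k_\gamma$ of the $n+1$ indices satisfy $A_i$ deterministically: the indices achieving the $k_\gamma$ smallest values all lie at or below the $k_\gamma$-th order statistic, even when there are ties. Summing,
\[
(n+1)\,\mathbb P(A_{n+1})=\mathbb E\Big[\sum_{i=1}^{n+1}\mathbbm 1_{A_i}\Big]\ge k_\gamma,
\]
so $\mathbb P(A_{n+1})\ge k_\gamma/(n+1)\ge 1-\gamma$ by the definition $k_\gamma=\lceil (n+1)(1-\gamma)\rceil$.

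The main subtlety is therefore the tie-handling in the order-statistic identity and in the counting step. The counting argument above sidesteps any need for continuity of the score distribution or for randomized tie-breaking, which is why I would use it rather than an argument based on the uniform distribution of the rank. Finally, integrate out the conditioning on $\widehat f$ to obtain the unconditional statement of Lemma~\ref{lem:outcome-validity}.
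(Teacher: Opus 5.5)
Your proof is correct and is the standard exchangeability-of-ranks argument; the paper gives no proof of its own and simply defers to \citet{vovk2005algorithmic}, where validity rests on the same argument. One simplification: the lower bound only needs the inclusion $\{S_{n+1}\le S^{(n+1)}_{(k_\gamma)}\}\subseteq\{S_{n+1}\le \widehat q_{1-\gamma}\}$, which follows at once from $S^{(n+1)}_{(k_\gamma)}\le S^{(n)}_{(k_\gamma)}$, so the tie-sensitive reverse inclusion you flagged as the delicate step is not needed (it matters only for an upper bound on coverage).
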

Whenever no ambiguity arises, we suppress the dependence on the calibration set $\mathcal D_n$ and write, for example, $\mathcal C_\gamma(X_{n+1})$ in place of $\mathcal C_\gamma(X_{n+1};\mathcal D_n)$.

\vspace{-.06in}
\subsection{Conformalized Ambiguity Set}
\vspace{-.06in}

We next lift the outcome-level guarantee in Lemma~\ref{lem:outcome-validity} to the space of probability measures. For a candidate law $\mathbb Q\in\mathcal P(\mathcal Y)$, the quantity $1-\mathbb Q(\mathcal C_\gamma(x))$ is the probability mass that $\mathbb Q$ assigns outside the level-$\gamma$ conformal region. Under the hierarchical model in \eqref{eq:hierarchical-model}, the tower property connects this distribution-level ``leakage'' to the observable conformal miscoverage event:
\[
    \mathbb E\!\left[
        1-\mathbb Q_{n+1}\!\left(
            \mathcal C_\gamma(X_{n+1})
        \right)
    \right]
    =
    \mathbb P\!\left\{
        Y_{n+1}\notin
        \mathcal C_\gamma(X_{n+1})
    \right\}
    \leq\gamma.
\]
This identity is the key \emph{measure-inversion} step: conformal prediction controls an observable outcome-level event, $Y_{n+1}\notin \mathcal C_\gamma(X_{n+1})$, while the hierarchical model in \eqref{eq:hierarchical-model} converts that guarantee into a moment bound on an unobserved probability measure $\mathbb{Q}_{n+1}$. It therefore provides finite-sample information about the future latent law without requiring the historical latent laws or their mixing kernel to be observed or estimated.

Formally, define the level-$\gamma$ \emph{conformal incompatibility}:
\[
    q_\gamma(\mathbb Q;x)
    \coloneqq
    \frac{
        1-\mathbb Q\!\left(\mathcal C_\gamma(x)\right)
    }{\gamma}.
\]
The score quantifies the degree to which a candidate law $\mathbb Q$ is incompatible with the conformal region $\mathcal C_\gamma(x)$: larger values correspond to less probability mass assigned to the region. Here, $q_\gamma$ is an $e$-statistic \citep{vovk2021values}, satisfying
\begin{equation}
\label{eq:e-moment}
\mathbb E\!\left[
q_\gamma\!\left(
\mathbb Q_{n+1};X_{n+1}
\right)
\right]
\leq 1.
\end{equation}

A single conformal level distinguishes only between outcomes inside and outside one region. To obtain a more refined description of how a candidate law distributes mass along the multiple conformal levels, let $\Gamma\subset(0,\alpha)$ be a collection of outcome-level miscoverage values, where $\alpha\in(0,1)$ is the desired latent-law miscoverage level. For a probability measure $\nu$ supported on $\Gamma$, define the \emph{aggregated conformal incompatibility}
\begin{equation}
\label{eq:aggregate-score}
    e_\nu(\mathbb Q;x)
    \coloneqq
    \int_\Gamma
        q_\gamma(\mathbb Q;x)
        \,\nu(\d\gamma).
\end{equation}
We call $\nu$ the \emph{conformal-level aggregation rule}; it determines which portions of the conformal path receive greater emphasis. For any fixed $\nu$, Tonelli's theorem and \eqref{eq:e-moment} give
\begin{equation}
\label{eq:aggregate-moment}
    \mathbb E\!\left[
        e_\nu\!\left(
            \mathbb Q_{n+1};X_{n+1}
        \right)
    \right]
    \leq 1.
\end{equation}

Motivated by this moment bound, we define the \emph{conformalized ambiguity set}
\[
    \mathfrak U_\alpha^\nu(x)
    \coloneqq
    \left\{
        \mathbb Q\in\mathcal P(\mathcal Y):
        e_\nu(\mathbb Q;x)
        \leq\frac{1}{\alpha}
    \right\}.
\]
The threshold $1/\alpha$ converts the moment bound in~\eqref{eq:aggregate-moment} into a containment guarantee through Markov's inequality. The restriction $\Gamma\subset(0,\alpha)$ is not required for validity, but it ensures that a single-level rule $\nu=\delta_\gamma$ imposes the nonvacuous leakage restriction $1-\mathbb Q(\mathcal C_\gamma(x))\leq\gamma/\alpha<1$.

Because $e_\nu(\mathbb Q;x)$ is affine in $\mathbb Q$, the ambiguity set $\mathfrak U_\alpha^\nu(x)$ is a convex generalized-moment set. The construction does not identify the full latent law; instead, it constrains how that law allocates probability mass across regions of increasing atypicality. Incorporating additional conformal levels refines this resolution, while $\nu$ determines which distinctions along the conformal path are most consequential.

\begin{theorem}[Finite-sample latent-law coverage]
\label{thm:multilevel-validity}
Suppose Assumption~\ref{ass:exchangeability} holds and the conformal-level aggregation rule $\nu$ is fixed independently of the final calibration sample $\mathcal D_n$ and the future observation. Then
\begin{equation}
\label{eq:conditional-law-coverage}
    \mathbb P\!\left\{
        \mathbb Q_{n+1}
        \in
        \mathfrak U_\alpha^\nu(X_{n+1})
    \right\}
    \geq 1-\alpha.
\end{equation}
\end{theorem}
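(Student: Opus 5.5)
The proof is Markov's inequality applied to the aggregated moment bound \eqref{eq:aggregate-moment}; the work lies in justifying that bound carefully.

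\emph{Step 1: single-level bound.} Fix $\gamma\in\Gamma$ and condition on $\mathcal D_n$, $X_{n+1}$ and $\mathbb Q_{n+1}$. By Assumption~\ref{ass:exchangeability}, $Y_{n+1}$ is conditionally distributed as $\mathbb Q_{n+1}$ and is independent of the calibration outcomes. The region $\mathcal C_\gamma(X_{n+1};\mathcal D_n)$ is measurable with respect to this conditioning $\sigma$-field, because $\widehat f$ is trained on independent data. Hence
\[
\mathbb P\{Y_{n+1}\notin\mathcal C_\gamma(X_{n+1})\mid \mathcal D_n,X_{n+1},\mathbb Q_{n+1}\}=1-\mathbb Q_{n+1}(\mathcal C_\gamma(X_{n+1})).
\]
Taking expectations (tower property) and applying Lemma~\ref{lem:outcome-validity} gives $\mathbb E[q_\gamma(\mathbb Q_{n+1};X_{n+1})]\le 1$, which is \eqref{eq:e-moment}.

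\emph{Step 2: aggregate over levels.} The map $(\gamma,\omega)\mapsto q_\gamma(\mathbb Q_{n+1};X_{n+1})$ is nonnegative. It is jointly measurable because $\widehat q_{1-\gamma}$ is a step function of $\gamma$ through $k_\gamma$, so $\gamma\mapsto \mathbb Q(\mathcal C_\gamma)$ is piecewise constant with finitely many pieces. Since $\nu$ is fixed independently of $\mathcal D_n$ and of the future observation, it can be treated as deterministic, or conditioned upon. Tonelli's theorem then allows the expectation and the $\nu$-integral to be exchanged:
\[
\mathbb E[e_\nu(\mathbb Q_{n+1};X_{n+1})]=\int_\Gamma \mathbb E[q_\gamma(\mathbb Q_{n+1};X_{n+1})]\,\nu(\d\gamma)\le 1,
\]
which is \eqref{eq:aggregate-moment}.

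\emph{Step 3: Markov.} Because $e_\nu\ge 0$, Markov's inequality gives
\[
\mathbb P\{e_\nu(\mathbb Q_{n+1};X_{n+1})>1/\alpha\}\le \alpha\,\mathbb E[e_\nu]\le\alpha.
\]
The complement of this event is exactly $\{\mathbb Q_{n+1}\in\mathfrak U^\nu_\alpha(X_{n+1})\}$, which yields \eqref{eq:conditional-law-coverage}.

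\emph{Main obstacle.} The delicate step is the conditioning argument in Step 1. It requires $1-\mathbb Q_{n+1}(\mathcal C_\gamma)$ to be the conditional miscoverage probability even though $\mathcal C_\gamma$ depends on $\mathcal D_n$. This follows from the conditional independence of the $Y_i$ given the latent pairs, since $Y_{n+1}$ is then independent of $(Y_1,\dots,Y_n)$ given all the $(X_i,\mathbb Q_i)$. I would also check that the independence assumption on $\nu$ is what keeps Lemma~\ref{lem:outcome-validity} applicable uniformly across levels. A data-dependent $\nu$ would break this exchange, because the bound holds only for each fixed $\gamma$.
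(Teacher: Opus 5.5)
Your proposal is correct and follows the paper's proof: the levelwise bound \eqref{eq:e-moment}, Tonelli's theorem to obtain \eqref{eq:aggregate-moment}, then Markov's inequality at threshold $1/\alpha$. Your conditioning argument in Step 1 and your joint-measurability remark only make explicit what the paper handles elsewhere: the tower-property measure-inversion identity is stated in the main text before the theorem, and the measurability needed for Tonelli is left implicit.
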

The proof is deferred to Appendix~\ref{app:proof-multilevel-validity}.

Theorem~\ref{thm:multilevel-validity} provides a finite-sample containment guarantee for the future latent conditional law using only observable outcome-level calibration. Neither the historical latent laws nor their mixing kernel needs to be estimated. 
We note that this ambiguity set differs fundamentally from constructing a confidence region around an estimated fixed distribution: the object covered here is itself a future random probability measure.
Also, the guarantee in~\eqref{eq:conditional-law-coverage} is marginal over the calibration data $\mathcal{D}_n$, context $X_{n+1}$, and latent law $\mathbb{Q}_{n+1}$. 

The aggregation rule $\nu$ needs to be fixed or selected using an auxiliary tuning dataset that is independent of the final calibration sample and future observation, and then held fixed during final calibration. This separation preserves Theorem~\ref{thm:multilevel-validity} while allowing the geometry of the ambiguity set to be adapted to the downstream decision problem.

\vspace{-.06in}
\section{Distributionally Robust Framework}
\label{sec:optimization}
\vspace{-.06in}

This section proposes a finite-dimensional shell reformulation of the problem in \eqref{eq:original-dro} under the conformalized ambiguity set. 
Our analysis proceeds in two steps: ($i$) 
We first fix the conformal-level aggregation rule and derive a finite-dimensional representation of the worst-case expectation, together with the structure of its adversarial distribution. 
($ii$) We then study how the aggregation rule itself can be jointly optimized subject to a prescribed complexity budget.

\vspace{-.06in}
\subsection{Evaluating the Worst-Case Expectation}
\label{sec:tractable-dro}
\vspace{-.06in}

We begin by considering aggregation rules with finite support. For an integer $K\geq1$, define
\begin{equation}
\label{eq:finite-nu-class}
\begin{aligned}
    \mathcal P_K(\Gamma)
    \coloneqq &
    \left\{
        \nu
        =
        \sum_{k=1}^K
            w_k\delta_{\gamma_k}:
        \gamma_k\in\Gamma,\;
        w\in\Delta_K
    \right\},\\
    \Delta_K
    \coloneqq &
    \left\{
        w\in\mathbb R_+^K:
        \sum_{k=1}^K w_k=1
    \right\}.
\end{aligned}
\end{equation}
Thus, $\mathcal P_K(\Gamma)$ contains aggregation rules supported on at most
$K$ conformal levels. 

Fix a context $x$, calibration sample $\mathcal D_n$, and aggregation rule $\nu \in \mathcal{P}_K(\Gamma)$, and suppose that $\mathfrak U_\alpha^\nu(x)$ is nonempty. For a bounded measurable loss $c:\mathcal Y\to\mathbb R$, define the robust upper risk
\begin{equation}
\label{eq:conformal-risk}
    \mathcal R_{\alpha,\nu,x}(c(\cdot))
    \coloneqq 
    \sup_{
        \mathbb{Q}\in
        \mathfrak U_\alpha^\nu(x)
    }
    \int_{\mathcal Y}
        c(y)\,
        \mathbb{Q}(\d y),
\end{equation}
which evaluates the largest expected loss
consistent with the conformal evidence encoded by
$\mathfrak U_\alpha^\nu(x)$. From the risk-measure perspective, the ambiguity set serves as the associated risk envelope. Since \eqref{eq:conformal-risk} is a supremum of expectations over a fixed nonempty set of probability measures, it also defines a coherent upper risk functional; the standard verification is deferred to Appendix~\ref{app:coherent-risk}.

The aggregated conformal incompatibility in \eqref{eq:aggregate-score} can be rewritten as
\[
    e_\nu(\mathbb Q;x) =
    \sum_{k=1}^K \frac{w_k}{\gamma_k} \left( 1- \mathbb{Q}(\mathcal{C}_{\gamma_k}(x))\right)
    \coloneqq \int_{\mathcal Y}h_\nu(y;x)\,\mathbb Q(\d y)
\]
where
\begin{equation}
\label{eq:penalty-function}
    h_\nu(y;x)\coloneqq\sum_{k=1}^K\frac{w_k}{\gamma_k}\mathbbm 1 \left\{y\notin\mathcal C_{\gamma_k}(x)\right\},
\end{equation}
Consequently,
\begin{equation}
    \label{eq:reform-conformal-ambiguity-set}
    \mathfrak U_\alpha^\nu(x)
    =
    \left\{
        \mathbb{Q}\in\mathcal P(\mathcal Y):
        \int_{\mathcal Y}
            h_\nu(y;x)
            \mathbb{Q}(\d y)
        \leq
        \frac{1}{\alpha}
    \right\}.
\end{equation}
This representation gives a useful optimization interpretation of the ambiguity set. 
The function $h_\nu(y;x)$ assigns a statistical incompatibility penalty to each outcome $y$, and then the original problem in \eqref{eq:conformal-risk} can be viewed as adversary maximizing expected loss subject to a budget on its expected incompatibility. 
It also admits a Wasserstein representation under a conformal-depth geometry, where the aggregation rule $\nu$ determines the geometry while the desired coverage level fixes the radius; a detailed interpretation is provided in Appendix~\ref{app:wasserstein_interpretation}.
Unlike a conventional moment function specified a priori and used as a distributional constraint in literature, such as \citep{wiesemann2014distributionally}, $h_\nu$ is generated in a data-driven manner from the nested conformal path.

After removing zero-weight and duplicate support points, we may, w.l.o.g., order the remaining conformal miscoverage levels as
$0<\gamma_1<\cdots<\gamma_K<\alpha$. Assume for every context \(x\), the innermost conformal region \(C_{\gamma_K}(x)\) is nonempty.
Because the conformal sets are nested,
$
    \mathcal C_{\gamma_1}(x)
    \supseteq
    \cdots
    \supseteq
    \mathcal C_{\gamma_K}(x).
$
They induce the disjoint \emph{conformal shell} partition
\begin{align*}
    \mathcal{A}_0(x)
    &\coloneqq 
    \mathcal C_{\gamma_K}(x),
    \nonumber\\
    \mathcal{A}_j(x)
    &\coloneqq 
    \mathcal C_{\gamma_{K-j}}(x)
    \setminus
    \mathcal C_{\gamma_{K-j+1}}(x),
    ~
    1 \le j\le K-1,
    \\
    \mathcal{A}_K(x)
    &\coloneqq 
    \mathcal{Y} \setminus C_{\gamma_1}(x).
    \nonumber
\end{align*}
Some shells may be empty, for example, because different conformal levels
produce identical prediction sets. We therefore define
\begin{equation}
\label{eq:nonempty-shell-index}
    \mathcal J(x)
    \coloneqq 
    \left\{
        j\in\{0,\ldots,K\}:
        \mathcal{A}_j(x)\neq\varnothing
    \right\}.
\end{equation}

The incompatibility penalty, denoted by $h_j$, is constant on the outcome $y$ in each nonempty conformal shell $\mathcal{A}_j$. Specifically,
let
\begin{equation}
\label{eq:shell-penalty}
    h_0\coloneqq 0,
    \quad
    h_j
    \coloneqq 
    \sum_{k=K-j+1}^K
        \frac{w_k}{\gamma_k},
    \quad
    j=1,\ldots,K.
\end{equation}
In particular, an outcome in the innermost shell lies within every conformal region and therefore receives zero penalty ($h_0=0$), whereas an outcome in the outermost shell lies outside every conformal region and receives the maximal penalty ($h_K=\sum_{k=1}^K w_k/\gamma_k$); the intermediate penalties $h_j$ capture progressively greater degrees of statistical atypicality, as shown in Figure~\ref{fig:notations}.

For a feasible decision $z$, define the corresponding regional worst-case
loss
\begin{equation}
\label{eq:shell-cost}
    L_j(z;x)
    \coloneqq 
    \sup_{y\in \mathcal{A}_j(x)}
        c(z,y),
    \quad
    j\in\mathcal J(x).
\end{equation}

\begin{assumption}[Finite regional losses]
\label{ass:finite-loss}
For every feasible $(z,x)$ and every $j\in\mathcal J(x)$,
$L_j(z;x)<+\infty$.
\end{assumption}

Assumption~\ref{ass:finite-loss} isolates the regime in which the robust objective is finite. It holds, for example, when $c(z,\cdot)$ is bounded above, or when the relevant outcome regions are compact and $c(z,\cdot)$ is upper semicontinuous. When the loss is unbounded on an outer shell, the conformal-induced robust risk may instead be infinite.

We can now evaluate the inner worst-case expectation.

\begin{theorem}[Worst-case expectation]
\label{thm:shell-reformulation}
Under Assumption~\ref{ass:finite-loss}, for every fixed $(z,x,\nu)$,
\begin{align}
    \mathcal R_{\alpha,\nu,x}
    \bigl(c(z,\cdot)\bigr)
    =
    \max_{\{p_j\}_{j\in\mathcal J(x)}}
    \quad&
    \sum_{j\in\mathcal J(x)}
        p_j L_j(z;x) 
\label{eq:shell-lp}
    \\
    \mathrm{s.t.}\quad&
    p_j\geq0,
    ~
    j\in\mathcal J(x),
\label{eq:shell-lp-const1}
    \\
    &
    \sum_{j\in\mathcal J(x)}
        p_j
    =
    1,
\label{eq:shell-lp-const2}
\\
    &
    \sum_{j\in\mathcal J(x)}
        h_jp_j
    \leq
    \frac{1}{\alpha}.
\label{eq:shell-lp-const3}
\end{align}
Moreover, an optimal shell-mass vector in
\eqref{eq:shell-lp} can be chosen with at most two positive components.
If the regional suprema $L_j(z;x)$ are attained, then there exists a
worst-case probability law supported on at most two outcome points, one in
each active shell. Without attainment, the same statement holds for
$\varepsilon$-optimal laws for every $\varepsilon>0$.
\end{theorem}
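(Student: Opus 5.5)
The plan is to prove the equality by two inequalities, using the representation \eqref{eq:reform-conformal-ambiguity-set} together with the fact that $h_\nu(\cdot;x)$ equals the constant $h_j$ on each shell $\mathcal A_j(x)$. The partition $\{\mathcal A_j(x)\}_{j\in\mathcal J(x)}$ is measurable because each conformal region is a sublevel set of the measurable score $s_{\widehat f}(x,\cdot)$. Write $V$ for the optimal value of the linear program \eqref{eq:shell-lp}–\eqref{eq:shell-lp-const3}.

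\emph{Upper bound ($\mathcal R\le V$).} Take any $\mathbb Q\in\mathfrak U_\alpha^\nu(x)$ and set $p_j=\mathbb Q(\mathcal A_j(x))$. Empty shells carry no mass, so $p$ is a probability vector on $\mathcal J(x)$, and $\int h_\nu\,d\mathbb Q=\sum_j h_jp_j\le 1/\alpha$. Thus $p$ is feasible. Since $c(z,y)\le L_j(z;x)$ on $\mathcal A_j(x)$, we get $\int c\,d\mathbb Q\le\sum_j p_jL_j\le V$.

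\emph{Lower bound ($\mathcal R\ge V$).} The feasible set of the LP is a nonempty compact polytope; it contains $p=e_0$, because $0\in\mathcal J(x)$ by the nonemptiness of $\mathcal C_{\gamma_K}(x)$ and $h_0=0$. The objective is finite by Assumption~\ref{ass:finite-loss}, so a maximizer $p^\star$ exists. Fix $\varepsilon>0$ and pick $y_j\in\mathcal A_j(x)$ with $c(z,y_j)\ge L_j-\varepsilon$ for each active $j$. Define $\mathbb Q=\sum_j p^\star_j\delta_{y_j}$. Then $\int h_\nu\,d\mathbb Q=\sum_j h_jp^\star_j\le1/\alpha$, so $\mathbb Q\in\mathfrak U_\alpha^\nu(x)$, and $\int c\,d\mathbb Q\ge V-\varepsilon$. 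Letting $\varepsilon\to0$ gives equality. If the suprema are attained, take $\varepsilon=0$.

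\emph{Sparsity.} The LP has one equality constraint \eqref{eq:shell-lp-const2} and one inequality \eqref{eq:shell-lp-const3}. A linear objective over a nonempty bounded polytope attains its maximum at a vertex. In standard form, after adding a slack to \eqref{eq:shell-lp-const3}, there are two equality rows, so a basic feasible solution has at most two nonzero basic variables among the $p_j$ and the slack. Hence at most two components of $p$ are positive.

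Combining this sparse $p^\star$ with the point-mass construction above yields a worst-case law, or an $\varepsilon$-optimal one, supported on at most two outcome points, one in each active shell. The main care point is this vertex argument. I will phrase it directly rather than cite standard form: if three or more $p_j$ were positive, the two constraint vectors $(1)_j$ and $(h_j)_j$ restricted to that support have a nontrivial common kernel direction $d$ satisfying $\sum_j d_j=0$ and $\sum_j h_jd_j=0$. Moving along $\pm d$ keeps the point feasible and changes the objective linearly. Moving in the non-decreasing direction until some coordinate hits zero reduces the support without lowering the value, and iterating gives a maximizer with at most two positive components.
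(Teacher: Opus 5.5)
Your proposal is correct and takes essentially the same route as the paper: reduce to shell masses using the shell-wise constant penalty $h_j$, realize (or $\varepsilon$-approximate) the regional suprema with point masses, and get two-shell sparsity from a basic feasible solution of the two-row standard-form LP. Your two-inequality phrasing and your kernel-direction support reduction are simply more explicit versions of the paper's mixture decomposition $\mathbb Q=\sum_j p_j\mathbb Q_j$ and of its appeal to the fundamental theorem of linear programming.
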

The proof is deferred to
Appendix~\ref{app:proof-shell-reformulation}. 

\begin{figure}[!t]
    \centering
    \includegraphics[width=\linewidth]{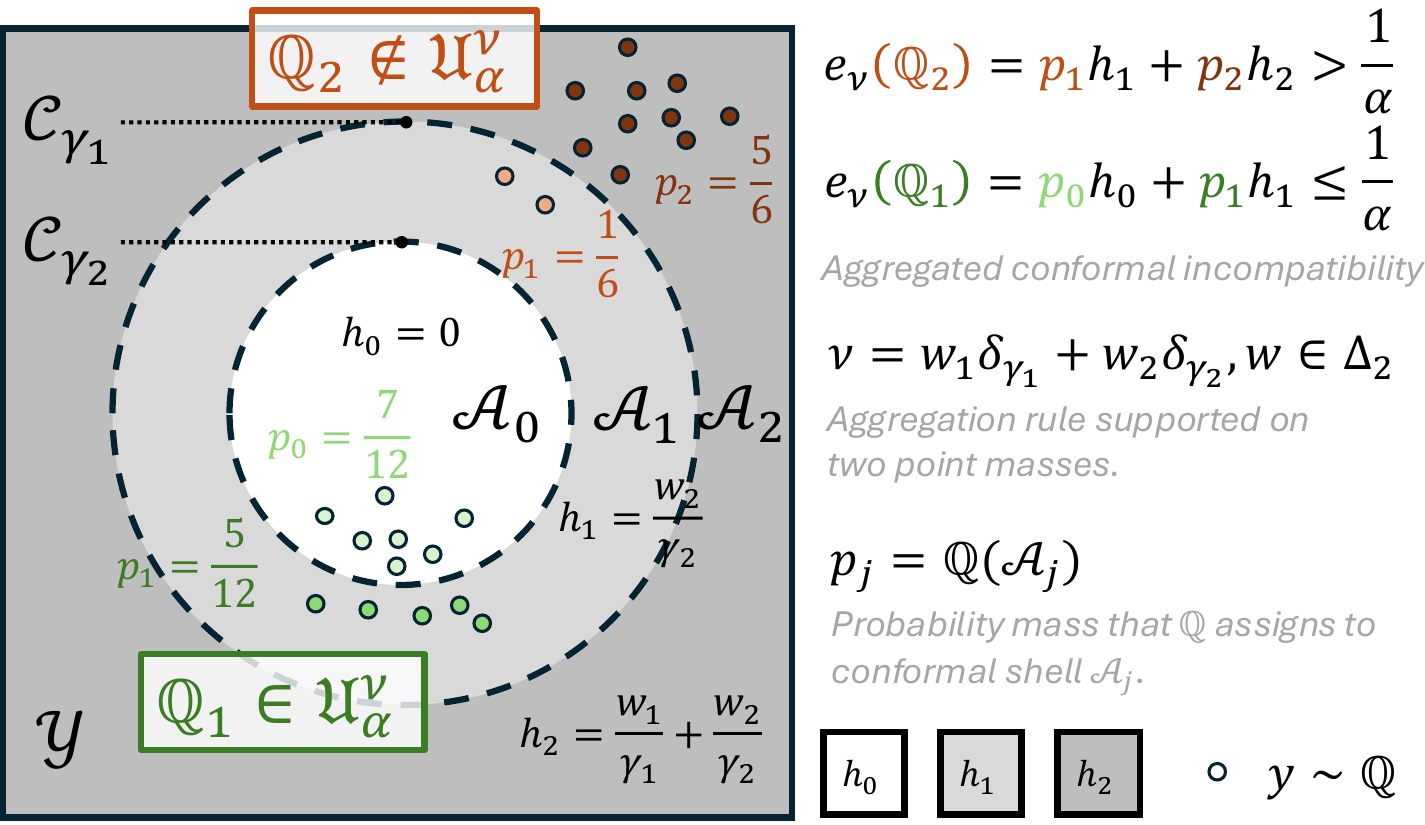}
    \caption{Key notions underlying the ambiguity set with a two-point aggregation rule $\nu=w_1\delta_{\gamma_1}+w_2\delta_{\gamma_2}$. The nested conformal regions partition $\mathcal Y$ into shells $\mathcal A_0,\mathcal A_1,\mathcal A_2$, with penalties $h_0$, $h_1$, and $h_2$. A candidate law $\mathbb Q$, with shell masses $p_j$, belongs to $\mathfrak U_\alpha^\nu$ if $e_\nu(\mathbb Q)\le 1/\alpha$; thus, $\mathbb Q_1$ is included whereas $\mathbb Q_2$ is excluded.
}
    \label{fig:notations}
\end{figure}

Here $p_j\coloneqq \mathbb{Q}(\mathcal{A}_j(x))$ is interpreted as the probability mass that a candidate adversarial distribution $\mathbb{Q}$ assigns to shell $\mathcal{A}_j(x)$. 
The constraints in \eqref{eq:shell-lp-const1} and \eqref{eq:shell-lp-const2} ensure that these shell masses form a probability distribution, while \eqref{eq:shell-lp-const3} limits their average conformal incompatibility according to \eqref{eq:reform-conformal-ambiguity-set}. Intuitively, the adversary allocates probability across shells to emphasize regions with large worst-case losses $L_j(z;x)$ while respecting the statistical-incompatibility budget.

Theorem~\ref{thm:shell-reformulation} separates the distributional optimization from the geometry of the outcome space. Once the regional losses $L_j(z;x)$ have been computed, the adversary no longer optimizes over an infinite-dimensional probability law. It only chooses how much probability mass to allocate across conformal shells subject to one linear incompatibility budget. The two-shell structure follows from this reduction: an extreme point of the shell-mass problem uses at most two active regions.

\vspace{-.06in}
\subsection{Finite-Sample Robust Risk Certificate}
\label{sec:risk-certificate}
\vspace{-.06in}

For the contextual decision problem, define
\begin{equation}
\label{eq:conformal-dro}
    \widehat z_n(x)
    \in
    \argmin_{z\in\mathcal Z(x)}
    \mathcal R_{\alpha,\nu,x}
    \bigl(c(z,\cdot)\bigr),
\end{equation}
and let
\[
    \widehat V_n(x)
    \coloneqq 
    \mathcal R_{\alpha,\nu,x}
    \bigl(
        c(\widehat z_n(x),\cdot)
    \bigr)
\]
denote the corresponding robust value.

Theorem~\ref{thm:shell-reformulation} leads to a direct dual reformulation of the DRO problem defined in \eqref{eq:conformal-dro}:
\begin{equation}
\label{eq:one-dimensional-dual}
    \inf_{\substack{
        z\in\mathcal Z(x),
        \lambda\geq0
    }}
    \left\{
        \frac{\lambda}{\alpha}
        +
        \max_{j\in\mathcal J(x)}
        \left[
            L_j(z;x)
            -
            \lambda h_j
        \right]
    \right\}.
\end{equation}
For a fixed decision $z$, the multi-dimensional maximization over probability measures is therefore reduced to a one-dimensional convex minimization in $\lambda$, after evaluating the regional worst-case losses. 


The resulting decision $\widehat{z}_n(x)$ enjoys the following finite-sample certificate.
\begin{corollary}[Finite-sample decision certificate]
\label{thm:risk-certificate}
Under the conditions of
Theorem~\ref{thm:multilevel-validity}, 
we have
\[
    \mathbb P\!\left\{
        \int_{\mathcal{Y}} c(\widehat z_n(X_{n+1}), y) \,\mathbb{Q}_{n+1}(\d y)
        \leq
        \widehat V_n(X_{n+1})
    \right\}
    \geq
    1-\alpha.
\]
\end{corollary}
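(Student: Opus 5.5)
The plan is to derive the certificate as an event inclusion combined with Theorem~\ref{thm:multilevel-validity}. Fix a realization of the calibration data $\mathcal D_n$ and the context $X_{n+1}$. Both the decision $\widehat z_n(X_{n+1})$ and the ambiguity set $\mathfrak U_\alpha^\nu(X_{n+1})$ are measurable functions of $(\mathcal D_n, X_{n+1})$, and neither depends on $\mathbb Q_{n+1}$. By the definition of $\widehat V_n$ as the supremum in \eqref{eq:conformal-risk} evaluated at $z=\widehat z_n(X_{n+1})$, the following holds whenever $\mathbb Q_{n+1}\in\mathfrak U_\alpha^\nu(X_{n+1})$:
\[
\int_{\mathcal Y} c(\widehat z_n(X_{n+1}),y)\,\mathbb Q_{n+1}(\d y)\le \sup_{\mathbb Q\in\mathfrak U_\alpha^\nu(X_{n+1})}\int_{\mathcal Y} c(\widehat z_n(X_{n+1}),y)\,\mathbb Q(\d y)=\widehat V_n(X_{n+1}).
\]
This is only the statement that a supremum dominates each of its members. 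It needs no structure of the loss beyond the integral being well defined, which holds under Assumption~\ref{ass:finite-loss} and boundedness, or as an extended-real value.

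From this pointwise implication we get the event inclusion
\[
\{\mathbb Q_{n+1}\in\mathfrak U_\alpha^\nu(X_{n+1})\}\subseteq\Big\{\textstyle\int c(\widehat z_n(X_{n+1}),y)\,\mathbb Q_{n+1}(\d y)\le\widehat V_n(X_{n+1})\Big\}.
\]
Monotonicity of probability then gives the claim, with probability at least $1-\alpha$ supplied by Theorem~\ref{thm:multilevel-validity}. Its hypotheses are exactly those assumed: exchangeability holds, and $\nu$ is fixed independently of $\mathcal D_n$ and the future point.

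The main point to check is that selecting $\widehat z_n$ after seeing the data does not break validity. This is why the argument runs through the inclusion rather than a fixed-$z$ bound. The coverage event concerns only $\mathbb Q_{n+1}$ and the set, and on that event the bound holds simultaneously for every $z\in\mathcal Z(x)$. It therefore holds for the data-dependent minimizer, with no union bound or sample splitting for $z$.

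A minor technical point is measurability of the event on the right, which involves the integral against a random measure and the random value $\widehat V_n$. I would handle it either by assuming measurable selection of $\widehat z_n$, or by interpreting the probability as an inner probability. The inclusion already shows that the event contains a measurable set of probability at least $1-\alpha$.

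A further check concerns $\widehat V_n$ when the set is empty or the minimum is not attained. Nonemptiness is assumed in Section~\ref{sec:tractable-dro}. If the minimum is not attained, I would read $\widehat z_n$ as any chosen feasible decision, since the argument never uses optimality of $\widehat z_n$.
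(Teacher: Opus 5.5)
Your proposal is correct and follows the paper's proof: both take the coverage event $E=\{\mathbb Q_{n+1}\in\mathfrak U_\alpha^\nu(X_{n+1})\}$, note that on $E$ the supremum bound holds simultaneously for every feasible $z$ (and so for the data-dependent $\widehat z_n(X_{n+1})$), and apply Theorem~\ref{thm:multilevel-validity} to get $\mathbb P(E)\ge 1-\alpha$. Your remarks on measurability and on a non-attained minimum are sensible refinements that the paper leaves implicit.
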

The proof is deferred to Appendix~\ref{app:proof-risk-certificate}.

Corollary~\ref{thm:risk-certificate} gives the robust value $\widehat V_n(X_{n+1})$ a direct operational interpretation: It provides a statistically certified upper bound on the expected cost of the selected decision with confidence $1-\alpha$.

\vspace{-.06in}
\subsection{Joint Optimization and Reformulation}
\label{sec:joint-geometry}
\vspace{-.06in}


We jointly optimize the aggregation rule and the decision using an independent tuning set $\mathcal{D}_m \coloneqq (X_i,Y_i)_{i=1}^m$, separate from the calibration set $\mathcal{D}_n$.
\begin{equation}
\widehat V_{m,K}(x)\coloneqq\inf_{z\in\mathcal Z(x),\,\nu\in\mathcal P_K(\Gamma_m)}\mathcal R_{\alpha,\nu,x}\bigl(c(z,\cdot)\bigr).
\label{eq:joint-geometry-design}
\end{equation}
Here, $K$ is a resolution budget that limits the number of conformal levels used to define the tuning geometry. 


For split conformal prediction, the continuum of candidate miscoverage levels reduces to a finite rank grid. Suppose $\alpha>1/(m+1)$. Since
$
    k_\gamma
    =
    \left\lceil (m+1)(1-\gamma)\right\rceil,
$
the conformal region is constant over each interval
$
    \gamma
    \in
    \left[
        r/(m+1),
        (r+1)/(m+1)
    \right),
    ~0 \le r \le m.
$
For $r\geq1$, replacing $\gamma$ by the left endpoint $r/(m+1)$ leaves the conformal region unchanged while weakly increasing the leakage coefficient $1/\gamma$. The resulting ambiguity set can only shrink, so the robust risk cannot increase. The interval corresponding to $r=0$ produces the trivial conformal region $\mathcal Y$ and contributes no incompatibility. Hence, it is sufficient to consider
$
    \Gamma_m
    \coloneqq
    \left\{
        r / (m+1):
        r=1,\ldots,m,\;
        r / (m+1) < \alpha
    \right\}.
$
If multiple levels generate the same conformal region, we retain the smallest such level and continue to denote the reduced grid by $\Gamma_m$.

Apply the shell construction introduced in Section~\ref{sec:tractable-dro} to the full ordered family
$\{\mathcal C_{\gamma}(x): \gamma\in \Gamma_m\}$.
For notational simplicity, we continue to denote the resulting shells by $\mathcal A_j(x)$, their nonempty-shell index set by
$
    \mathcal J(x)
    \coloneqq
    \left\{
        j\in\{0,\ldots,R\}:
        \mathcal A_j(x)\neq\varnothing
    \right\},
$
and their regional worst-case losses by $L_j(z;x)$. Thus, $i=1,\dots,R$ indexes candidate conformal levels, whereas $j\in\mathcal J(x)$ indexes nonempty shells.

\begin{theorem}[Joint optimization reformulation]
\label{thm:cardinality-reformulation}
Suppose $\alpha>1/(m+1)$ and Assumption~\ref{ass:finite-loss} holds for every $j\in\mathcal J(x)$. Then problem~\eqref{eq:joint-geometry-design} is equivalent to
\begin{align*}
    \inf_{\substack{
        z\in\mathcal Z(x),\\
        t\in\mathbb R,\;
        u\in\mathbb R_+^R
    }}
    \quad&
    t+\frac{1}{\alpha}\sum_{i=1}^{R}u_i
    \\
    \mathrm{s.t.}\quad&
    t+
    \sum_{i=R-j+1}^{R}\frac{u_i}{\gamma_i}
    \geq L_j(z;x),
    ~ j\in\mathcal J(x),
    \\
    &
    \|u\|_0\leq K.
\end{align*}
If $(z^\star,t^\star,u^\star)$ is optimal and $\lambda^\star:=\sum_{i=1}^R u_i^\star>0$, then the context-dependent tuning rule is
\[
\widehat\nu_{m,K}(x)=\sum_{i:u_i^\star>0}\widehat w_i(x)\delta_{\gamma_i},\qquad \widehat w_i(x)=\frac{u_i^\star}{\lambda^\star}.
\]
\end{theorem}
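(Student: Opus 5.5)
The plan is to combine the LP characterization of Theorem~\ref{thm:shell-reformulation} with LP duality, and then absorb the dual multiplier into the aggregation weights through the change of variables $u_i=\lambda w_i$. Throughout we work on the full reduced grid $\Gamma_m=\{\gamma_1<\cdots<\gamma_R\}$ and its shells $\mathcal A_j(x)$, $j\in\mathcal J(x)$. Any $\nu\in\mathcal P_K(\Gamma_m)$ is written as $\nu=\sum_{i=1}^R w_i\delta_{\gamma_i}$ with $w\in\Delta_R$ and $\|w\|_0\le K$. Its penalty function~\eqref{eq:penalty-function} is then constant on each fine shell $\mathcal A_j$ and equals $h_j(w)=\sum_{i=R-j+1}^R w_i/\gamma_i$. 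This holds because the coarse shells of $\nu$ are unions of the fine shells, and zero weights contribute nothing.

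\textbf{Step 1: inner LP on the fine shells.} First I would re-run the argument of Theorem~\ref{thm:shell-reformulation} on the fine partition; the argument only uses that $h_\nu$ is constant on each cell. This gives $\mathcal R_{\alpha,\nu,x}(c(z,\cdot))$ as the LP value $\max\{\sum_j p_jL_j : p\ge0,\ \sum_j p_j=1,\ \sum_j h_j(w)p_j\le 1/\alpha\}$. As a sanity check, the supremum of $c$ over a coarse shell equals the maximum of the fine $L_j$ within it, and merging equal-penalty cells does not change the LP value.

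\textbf{Step 2: strong duality.} The LP is feasible, since $h_0=0$ and $0\in\mathcal J(x)$ (the innermost region is assumed nonempty), and it is bounded by Assumption~\ref{ass:finite-loss}. Its dual is therefore
\[
\min_{t\in\mathbb R,\,\lambda\ge0}\Big\{t+\lambda/\alpha:\ t+\lambda h_j(w)\ge L_j(z;x),\ j\in\mathcal J(x)\Big\},
\]
which matches~\eqref{eq:one-dimensional-dual}.

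\textbf{Step 3: change of variables.} Set $u=\lambda w\in\mathbb R_+^R$. Then $\lambda=\sum_i u_i$, $\lambda h_j(w)=\sum_{i=R-j+1}^R u_i/\gamma_i$, and $\|u\|_0\le\|w\|_0\le K$. Consequently, taking the infimum over $(z,w,t,\lambda)$ is at least the stated program.

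For the converse, take a feasible $(z,t,u)$ with $\|u\|_0\le K$. If $\lambda=\sum u_i>0$, define $w=u/\lambda$; this gives a feasible $(\nu,t,\lambda)$ with the same objective. If $u=0$, pick any $\nu$ together with $\lambda=0$. The dual constraints then read $t\ge L_j$, which remain feasible for every $w$ with identical objective $t$, so the value is again dominated.

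Hence the two infima coincide. The recovery of $\widehat\nu_{m,K}(x)$ with weights $u_i^\star/\lambda^\star$ follows directly from the forward map.

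\textbf{Main obstacle.} The delicate point is Step 1: justifying that the fine-grid LP with $\nu$-dependent penalties equals the coarse-shell robust risk. This requires that empty shells and zero-weight levels, which create coarse shells merging several fine ones, do not alter the LP value. I would handle it by noting that the fine LP with equal penalties across merged cells is equivalent to the coarse LP with $L$ equal to the maximum over the merged cells. A secondary detail is that the $\|\cdot\|_0$ constraint makes the program nonconvex, but the equivalence argument is pointwise in the sparsity pattern and so is unaffected.
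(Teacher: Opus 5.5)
Your proposal is correct and follows essentially the same route as the paper's proof. Both arguments use the constant penalty $h_j(w)$ on the finest rank-grid shells, LP duality for each fixed $(z,w)$, and the substitution $u_i=\lambda w_i$ with the two-case reverse construction ($\lambda>0$ versus $u=0$), matching the paper's Steps 2--4. The paper additionally proves that restricting $\nu$ from $(0,\alpha)$ to the grid $\Gamma_m$ loses nothing (left-endpoint moves, dropping $r=0$ atoms, merging duplicates). You may take this as given, since problem~\eqref{eq:joint-geometry-design} is already defined over $\mathcal P_K(\Gamma_m)$.
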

The proof is provided in Appendix~\ref{app:proof-thm:cardinality-reformulation}.

The reformulation defines the a measurable map $x\mapsto\widehat\nu_{m,K}(x)$.  It applies the shell representation of Theorem~\ref{thm:shell-reformulation} once to the finest finite-sample conformal partition and uses the support of $u$ to select the active conformal levels. Specifically, $u_i>0$ indicates that level $\gamma_i$ is selected, while the normalized magnitude $u_i/\sum_{\ell=1}^{R}u_\ell$ determines its aggregation weight. Hence, $K$ controls the maximum resolution of the ambiguity geometry without requiring explicit enumeration of candidate support sets.

The statistical interpretation of~\eqref{eq:joint-geometry-design} requires care. Theorem~\ref{thm:multilevel-validity} and Corollary~\ref{thm:risk-certificate} apply directly only when the aggregation rule is independent of both the final calibration set and the future observation. A pointwise solution of~\eqref{eq:joint-geometry-design} instead defines a context-dependent map $x\mapsto\widehat\nu_{m,K}(x)$, whose evaluation at $X_{n+1}$ remains dependent on the future context even though the map is learned using only $\mathcal D_m$. To restore validity, we freeze this map and the tuning-sample conformal regions, regard the resulting context-dependent penalty $h_{\widehat\nu_{m,K}(x)}(y;x)$ as a raw nonconformity score, and use $\mathcal D_n$ to conformally normalize it into a split conformal $e$-predictor 
\citep{vovk2025conformal}, which serves as the final incompatibility penalty. 
Details are provided in Appendix~\ref{sec:proposed-algorithm}.


\vspace{-.06in}
\section{Experiments}
\vspace{-.06in}

We evaluate \texttt{Conformal-DRO} through numerical experiments that examine its learned ambiguity geometry and compare its decision performance and coverage with benchmarks under latent-law heterogeneity.


\vspace{-.06in}
\subsection{Synthetic Examples}
\label{sec:synthetic-experimental-setup}
\vspace{-.06in}


We first consider a covariate-free scalar setting with either a centered unit-scale Gaussian or Student-$t_3$ outcome distribution and linear cost $c(z,y)=z^\top y$. This design isolates the learned ambiguity geometry from prediction error and makes the resulting shells directly visualizable. Implementation details are provided in Appendix~\ref{app:synthetic-experimental-details}.
For \texttt{Conformal-DRO}, an auxiliary sample of size $500$ selects the aggregation rule $\nu_K=\sum_{k=1}^K w_k\delta_{\gamma_k}$, which is then held fixed while an independent sample of size $500$ calibrates the conformal regions. We set $\alpha=0.2$ and compare against a population oracle that uses exact equal-tailed regions of the corresponding truncated population law. 

The shell-allocation comparison in Figure~\ref{fig:pop-syn-linear-shells} shows that \texttt{Conformal-DRO} recovers the conformal levels identified by the population-optimal geometry. Increasing $K$ refines the shell partition and spreads weight across more quantile boundaries, primarily in central and moderately extreme regions. The Student-$t_3$ allocations in Figure~\ref{fig:add-pop-syn-linear-shells} (Appendix~\ref{app:synthetic-experimental-details}) extend farther into the tail because of their slower decay and wider quantile spacing. Although finite-sample calibration shifts individual boundaries and weights, it preserves their nested ordering and overall profile. Thus, $K$ controls geometric resolution, while the outcome distribution determines the locations and weights of the selected shells.

Figure~\ref{fig:optimal-value} shows sharp improvements for small $K$ followed by diminishing returns. The population-oracle values decrease smoothly, whereas the finite-sample values exhibit minor nonmonotonicity because tuning and calibration use independent samples. At $K=100$, the remaining gaps are approximately $0.054$ for the Gaussian distribution and $0.176$ for Student-$t_3$, whose heavier tails make extreme quantiles more variable and operationally important. The resulting plateau indicates that, once discretization error is small, the remaining gap is driven by finite-sample tuning and calibration and cannot be removed by increasing $K$ alone.

\begin{figure}[!t]
    \centering
    \begin{subfigure}[t]{\linewidth}
        \centering
        \includegraphics[width=.32\linewidth]{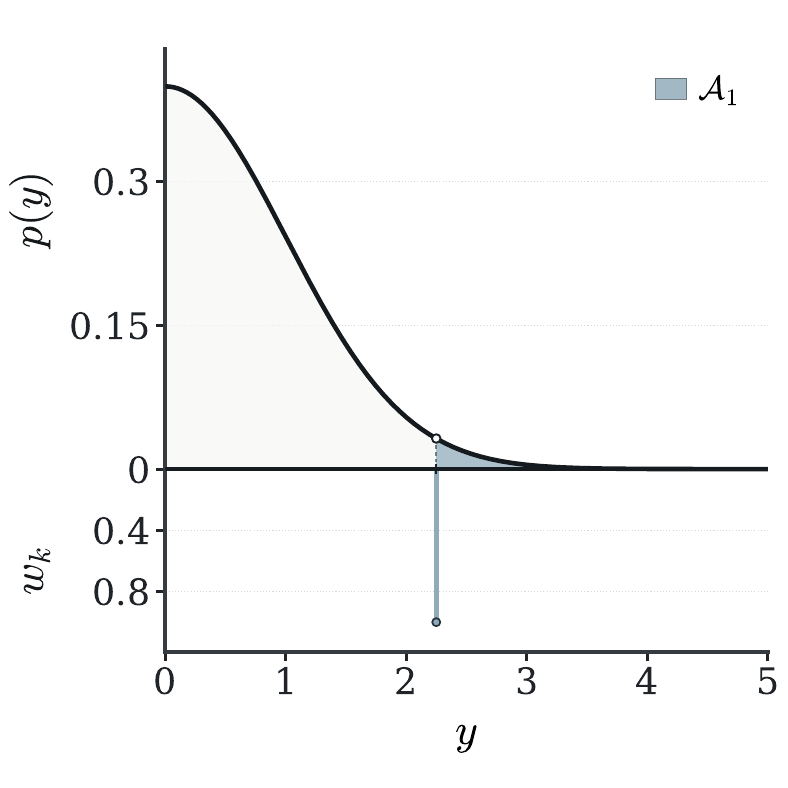}
        \includegraphics[width=.32\linewidth]{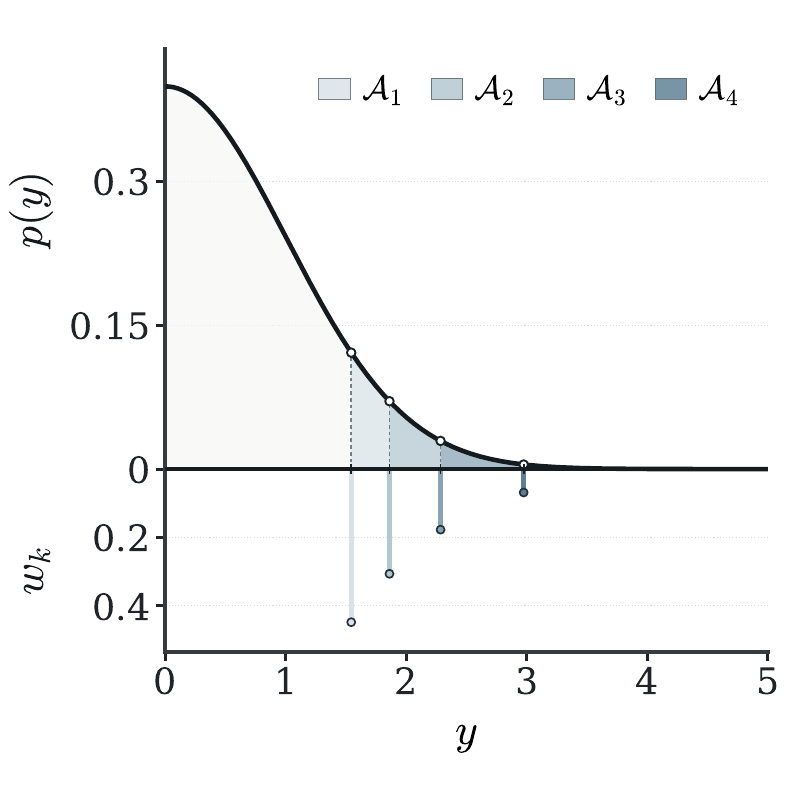}
        \includegraphics[width=.32\linewidth]{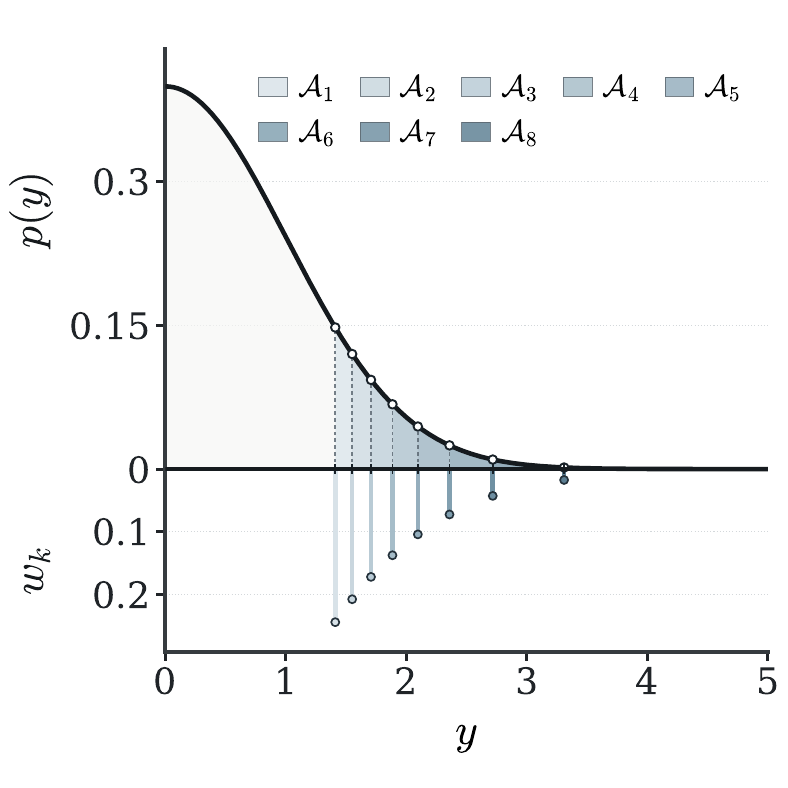}
        \caption{Population-oracle}
    \end{subfigure}
    \vfill
    \begin{subfigure}[t]{\linewidth}
        \centering
        \includegraphics[width=.32\linewidth]{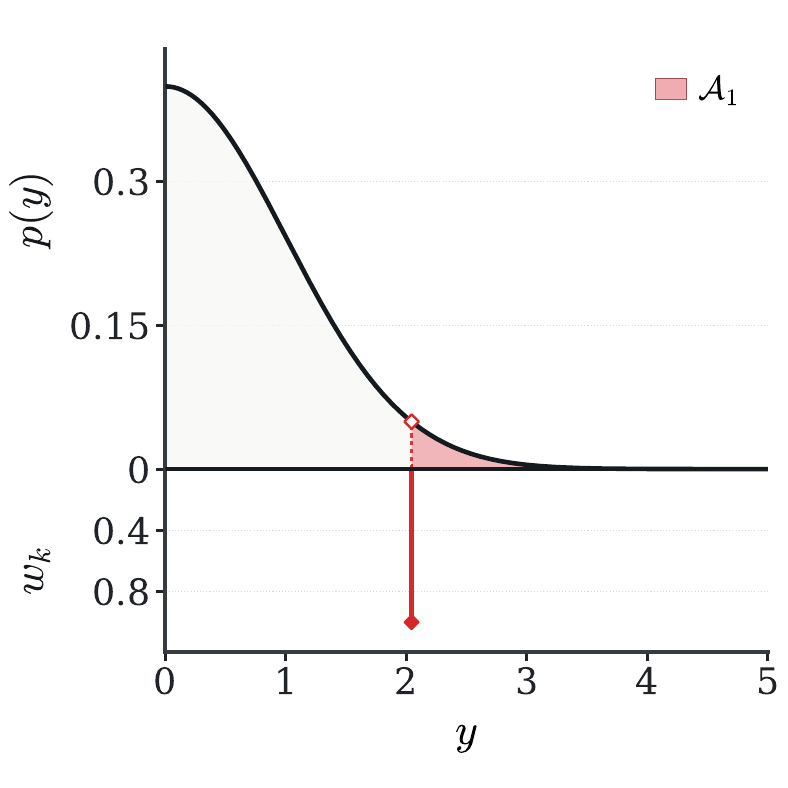}
        \includegraphics[width=.32\linewidth]{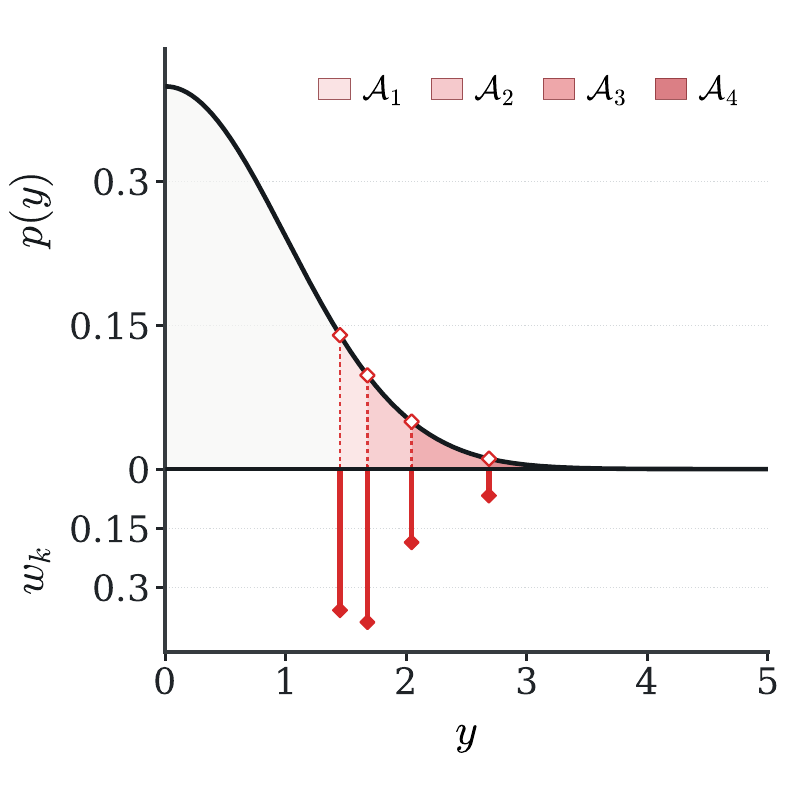}
        \includegraphics[width=.32\linewidth]{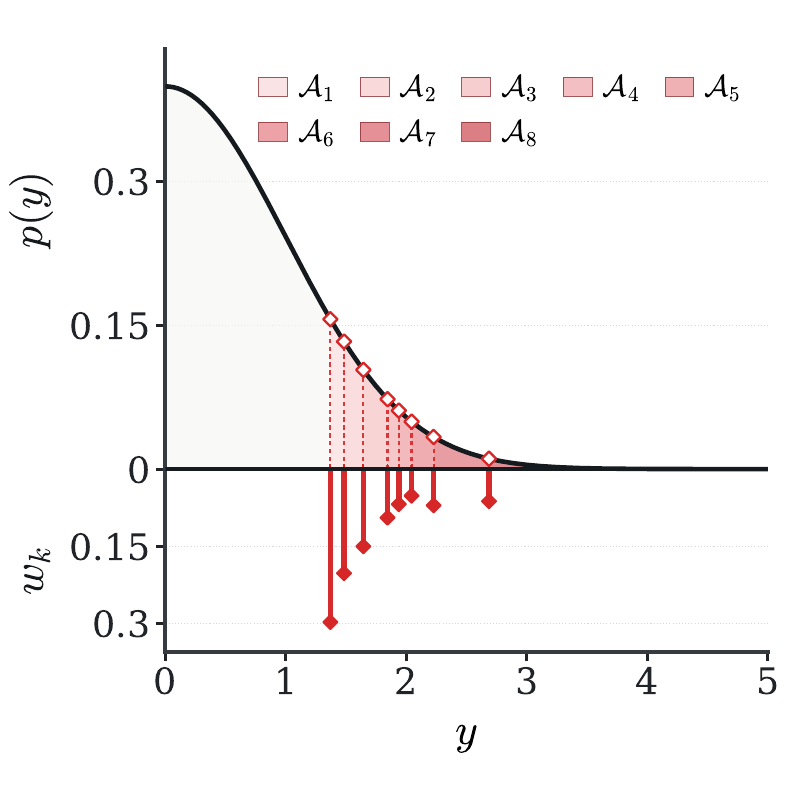}
        \caption{\texttt{Conformal-DRO}}
    \end{subfigure}
    \vspace{-.1in}
    \caption{
    Population-oracle and \texttt{Conformal-DRO} shell allocations under the linear cost $c(z,y)=y^\top z$. 
    Black curves show the population densities, shaded regions the induced shells, dashed lines the optimized upper quantile endpoints, and downward stems the aggregation weights $w_k$. Only $y\geq0$ is shown; $\alpha=0.2$.
    }
    \label{fig:pop-syn-linear-shells}
    \vspace{-.15in}
\end{figure}

\begin{figure}[!t]
    \vspace{-.1in}
    \centering
    \begin{subfigure}[t]{0.49\linewidth}
        \centering
        \includegraphics[width=\linewidth]{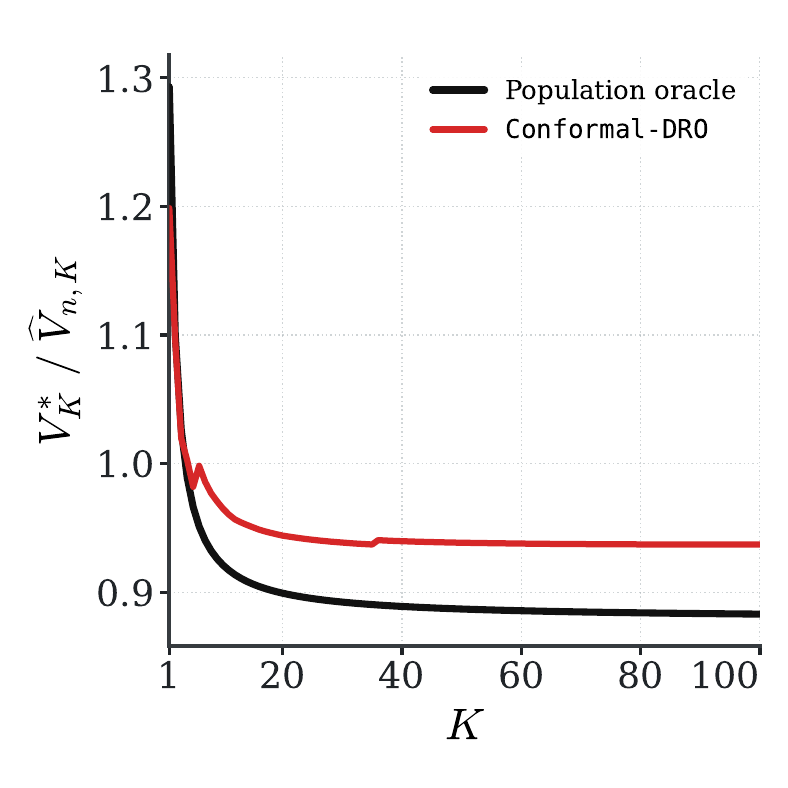}
        \caption{Gaussian}
    \end{subfigure}
    \hfill
    \begin{subfigure}[t]{0.49\linewidth}
        \centering
        \includegraphics[width=\linewidth]{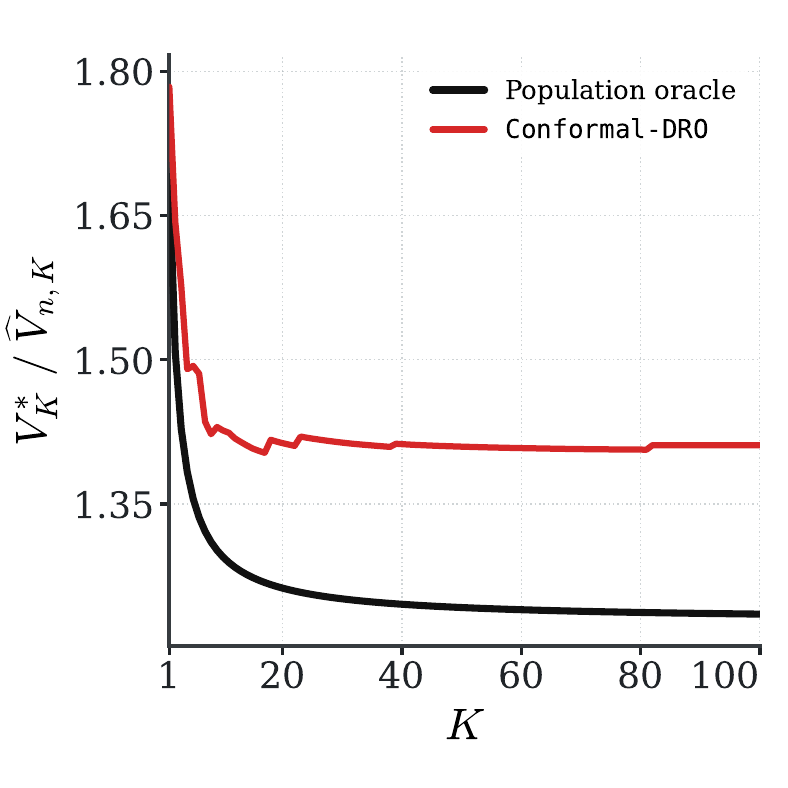}
        \caption{Student-$t_3$}
    \end{subfigure}
    \vspace{-.1in}
    \caption{Population-oracle values $V_K^\star$ and finite-sample \texttt{Conformal-DRO} values $\widehat V_{n,K}$ as functions of the support budget $K$. 
    }
    \label{fig:optimal-value}
    \vspace{-.1in}
\end{figure}

\vspace{-.06in}
\subsection{Comparison with Baselines}
\vspace{-.06in}
\label{subsec:baseline-comparison}

We compare \texttt{Conformal-DRO} with a mixture stochastic program (Mixture SP) \citep{bertsimas2020predictive}, mixture-centered Wasserstein DRO (Mixture WDRO) \citep{zhang2024optimal} in the capacitated two-product newsvendor problem detailed in Appendix~\ref{app:baseline-comparison}. An omitted event indicator $W$ determines the realized law $\mathbb{Q}_{x,W}$, whereas the observable conditional law is the mixture $\mathbb{Q}_x=(1-p(x))\mathbb{Q}_{x,0}+p(x)\mathbb{Q}_{x,1}$. To isolate this target mismatch from estimation error, the exact $\mathbb{Q}_x$ is supplied to both mixture-centered baselines and the conformal score.

\begin{table}[t]
\centering
\caption{Baseline comparison under strong latent-law heterogeneity. Entries are means over ten repetitions.}
\label{tab:baseline-comparison}
\vspace{-.1in}
\footnotesize
\setlength{\tabcolsep}{3pt}
\renewcommand{\arraystretch}{0.95}
\begin{tabular}{@{}lccc@{}}
\toprule
Method & Regret $\downarrow$ & Law cov. $\uparrow$ & Cert. cov. $\uparrow$ \\
\midrule
Mixture SP & $0.848$ & -- & $0.804$ \\
Mixture WDRO & $0.853$ & $0.345$ & $0.849$ \\
\texttt{Conformal-DRO}, $K=1$ & $1.616$ & $0.927$ & $1.000$ \\
\texttt{Conformal-DRO}, $K=4$ & $1.338$ & $0.946$ & $1.000$ \\
\texttt{Conformal-DRO}, $K=8$ & $1.319$ & $0.945$ & $1.000$\\
\bottomrule
\end{tabular}
\end{table}

Table~\ref{tab:baseline-comparison} shows a clear efficiency--protection tradeoff. The mixture methods attain lower mean regret by optimizing around the exact mixture law, but provide substantially weaker protection for the future realized law: mixture WDRO covers only $0.345$ of realized laws, while mixture SP does not construct a law-level ambiguity set. In contrast, all \texttt{Conformal-DRO} variants exceed the nominal law-coverage target of $1-\alpha=0.8$ and attain certificate coverage of $1.000$. Increasing $K$ improves efficiency without materially changing coverage: moving from $K=1$ to $K=8$ reduces regret from $1.616$ to $1.319$, a $18.4\%$ reduction, while maintaining law coverage near $0.95$. Thus, additional conformal levels mitigate the conservatism of a single contour while preserving reliable protection against latent-law heterogeneity.


\vspace{-.06in}
\section{Conclusion}
\vspace{-.06in}

We introduced \texttt{Conformal-DRO} for decision-making under latent distributional heterogeneity. Outcome-level conformal validity yields finite-sample coverage of the future latent law. The corresponding conformal path determines a data-driven transport geometry, while the prescribed miscoverage level calibrates the radius. Its shell structure reduces the infinite-dimensional DRO problem to a finite-dimensional optimization, making latent-law uncertainty both statistically valid and operationally tractable.


\bibliographystyle{plainnat}
\bibliography{refs}

\clearpage
\appendix
\thispagestyle{empty}

\onecolumn
\aistatstitle{
Supplementary Materials 
}

\section{Proof of Theorem~\ref{thm:multilevel-validity}}
\label{app:proof-multilevel-validity}

\begin{proof}
For every fixed $\gamma\in\Gamma$, Eq.~\eqref{eq:e-moment} gives
\[
    \mathbb E\!\left[
        q_\gamma\!\left(
            \mathbb Q_{n+1};X_{n+1}
        \right)
    \right]
    \leq 1.
\]
Since the integrand is nonnegative, Tonelli's theorem yields
\[
    \mathbb E\!\left[
        e_\nu\!\left(
            \mathbb Q_{n+1};X_{n+1}
        \right)
    \right]
    =
    \int_\Gamma
        \mathbb E\!\left[
            q_\gamma\!\left(
                \mathbb Q_{n+1};X_{n+1}
            \right)
        \right]
        \nu(\d\gamma)
    \leq
    \int_\Gamma 1\,\nu(\d\gamma)
    =1.
\]
Markov's inequality therefore implies
\[
    \mathbb P\!\left\{
        e_\nu\!\left(
            \mathbb Q_{n+1};X_{n+1}
        \right)
        >
        \frac{1}{\alpha}
    \right\}
    \leq\alpha,
\]
which is equivalent to~\eqref{eq:conditional-law-coverage}.
\end{proof}

\section{Coherent-Risk Properties}
\label{app:coherent-risk}

Fix $(x,\mathcal D_n,\nu)$ and suppose $\mathfrak U_\alpha^\nu(x)$ is nonempty. Recall
\[
    \mathcal R_{\alpha,\nu,x}(c)
    =
    \sup_{
        \mathbb{Q}\in
        \mathfrak U_\alpha^\nu(x)
    }
    \mathbb E_\mathbb{Q}[c].
\]

\begin{proposition}[Coherence of the conformal upper risk]
\label{prop:coherence}
For bounded measurable losses $c,c_1,c_2$, constants $a\in\mathbb R$, and $\lambda\geq0$,
$\mathcal R_{\alpha,\nu,x}$ satisfies:
\begin{enumerate}
    \item monotonicity:
    if $c_1\leq c_2$, then
    \[
        \mathcal R_{\alpha,\nu,x}(c_1)
        \leq
        \mathcal R_{\alpha,\nu,x}(c_2);
    \]

    \item translation equivariance:
    \[
        \mathcal R_{\alpha,\nu,x}(c+a)
        =
        \mathcal R_{\alpha,\nu,x}(c)+a;
    \]

    \item positive homogeneity:
    \[
        \mathcal R_{\alpha,\nu,x}(\lambda c)
        =
        \lambda
        \mathcal R_{\alpha,\nu,x}(c);
    \]

    \item subadditivity:
    \[
        \mathcal R_{\alpha,\nu,x}(c_1+c_2)
        \leq
        \mathcal R_{\alpha,\nu,x}(c_1)
        +
        \mathcal R_{\alpha,\nu,x}(c_2).
    \]
\end{enumerate}
Therefore $\mathcal R_{\alpha,\nu,x}$ is a coherent upper risk functional.
\end{proposition}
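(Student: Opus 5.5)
The plan is to verify each of the four properties directly from the representation $\mathcal R_{\alpha,\nu,x}(c)=\sup_{\mathbb Q\in\mathfrak U_\alpha^\nu(x)}\mathbb E_{\mathbb Q}[c]$. We use only two facts: the set $\mathfrak U_\alpha^\nu(x)$ is a fixed, nonempty family of probability measures that does not depend on $c$, and the map $c\mapsto\mathbb E_{\mathbb Q}[c]$ is linear, monotone and normalized, with $\mathbb E_{\mathbb Q}[1]=1$, for each fixed $\mathbb Q$. Boundedness of $c,c_1,c_2$ makes every expectation finite. Nonemptiness then gives $\mathcal R_{\alpha,\nu,x}(c)\in[\inf c,\sup c]\subset\mathbb R$, so all arithmetic below is among real numbers and no $\infty-\infty$ issues arise.

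\emph{Monotonicity.} If $c_1\le c_2$ pointwise, then $\mathbb E_{\mathbb Q}[c_1]\le\mathbb E_{\mathbb Q}[c_2]\le\mathcal R_{\alpha,\nu,x}(c_2)$ for every $\mathbb Q$ in the set. Taking the supremum over $\mathbb Q$ gives the claim.

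\emph{Translation equivariance.} For every $\mathbb Q$ in the set, $\mathbb E_{\mathbb Q}[c+a]=\mathbb E_{\mathbb Q}[c]+a$ because $\mathbb Q$ is a probability measure. A supremum commutes with adding a constant, so the claim follows.

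\emph{Positive homogeneity.} For $\lambda>0$, $\sup_{\mathbb Q}\lambda\mathbb E_{\mathbb Q}[c]=\lambda\sup_{\mathbb Q}\mathbb E_{\mathbb Q}[c]$. The case $\lambda=0$ needs separate attention: $\mathcal R_{\alpha,\nu,x}(0)=0$ holds precisely because the set is nonempty. Without nonemptiness we would get $\sup\varnothing=-\infty$, so this is where the standing hypothesis is used.

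\emph{Subadditivity.} For each $\mathbb Q$, $\mathbb E_{\mathbb Q}[c_1+c_2]=\mathbb E_{\mathbb Q}[c_1]+\mathbb E_{\mathbb Q}[c_2]\le\mathcal R_{\alpha,\nu,x}(c_1)+\mathcal R_{\alpha,\nu,x}(c_2)$. Taking the supremum on the left gives the claim.

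None of these steps is a real obstacle. The only point needing care is checking that the ambiguity set does not depend on the loss, which is true since $h_\nu$ and the threshold $1/\alpha$ depend only on $(x,\mathcal D_n,\nu,\alpha)$. The other points are the finiteness and nonemptiness remarks above, which justify the manipulations with suprema. Together the four properties are the definition of a coherent (upper) risk functional, which concludes the proposition.
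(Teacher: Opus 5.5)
Your proposal is correct and matches the paper's proof, which likewise checks each of the four properties directly from the supremum-of-expectations representation over the fixed nonempty set $\mathfrak U_\alpha^\nu(x)$. Your remarks on finiteness of $\mathcal R_{\alpha,\nu,x}(c)$ and on the $\lambda=0$ case are small refinements that the paper leaves implicit.
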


\begin{proof}
Each property follows from the representation as a supremum of expectations over a fixed nonempty set of probability measures.

For monotonicity, $c_1\leq c_2$ implies $\mathbb E_\mathbb{Q}[c_1]\leq\mathbb E_\mathbb{Q}[c_2]$ for every feasible $\mathbb{Q}$.

For translation equivariance,
\[
    \mathcal R_{\alpha,\nu,x}(c+a) =
    \sup_\mathbb{Q}
    \left(
        \mathbb E_\mathbb{Q}[c]+a
    \right) =
    \mathcal R_{\alpha,\nu,x}(c)+a.
\]

For $\lambda\geq0$,
\[
    \mathcal R_{\alpha,\nu,x}(\lambda c)
    =
    \sup_\mathbb{Q}
        \lambda\mathbb E_\mathbb{Q}[c]
    =
    \lambda
    \mathcal R_{\alpha,\nu,x}(c).
\]

Finally,
\[
    \mathcal R_{\alpha,\nu,x}(c_1+c_2)
    =
    \sup_\mathbb{Q}
    \left\{
        \mathbb E_\mathbb{Q}[c_1]
        +
        \mathbb E_\mathbb{Q}[c_2]
    \right\} \leq
    \sup_\mathbb{Q}\mathbb E_\mathbb{Q}[c_1]
    +
    \sup_\mathbb{Q}\mathbb E_\mathbb{Q}[c_2],
\]
which proves subadditivity.
\end{proof}

\section{Geometric Interpretation of Conformalized Ambiguity Set}
\label{app:wasserstein_interpretation}

Fix a context $x$, calibration sample $\mathcal{D}_n$, and aggregation rule $\nu$. 
Assuming that $h_\nu(\cdot;x)$ is finite-valued, define the conformal-depth pseudometric
\begin{equation}
    d_{\nu,x}(y,y')
    \coloneqq
    \left|h_\nu(y;x)-h_\nu(y';x)\right|.
    \label{eq:conformal-depth-pseudometric}
\end{equation}
This geometry measures differences in statistical depth rather than physical distance: distinct outcomes have zero distance whenever they receive the same incompatibility penalty. Consequently, $d_{\nu,x}$ induces a genuine metric on the quotient space obtained by identifying outcomes with the same conformal depth.

Let
$
    \mathcal Y_{\nu,x}^0
    \coloneqq
    \left\{
        y\in\mathcal Y:
        h_\nu(y;x)=0
    \right\}
$
be nonempty, and fix any $y_0\in\mathcal Y_{\nu,x}^0$. Since $d_{\nu,x}(y_0,y)=h_\nu(y;x)$ and the only coupling between $\delta_{y_0}$ and $\mathbb Q$ is $\delta_{y_0}\otimes\mathbb Q$, we obtain
\[
    W_{1,d_{\nu,x}}(\delta_{y_0},\mathbb Q)
    =
    \int_{\mathcal Y}h_\nu(y;x)\,\mathbb Q(dy)
    =
    e_\nu(\mathbb Q;x).
\]
Consequently, the conformal-induced ambiguity set admits the Wasserstein-like representation
\[
    \mathfrak U_\alpha^\nu(x)
    =
    \left\{
        \mathbb Q\in\mathcal P(\mathcal Y):
        W_{1,d_{\nu,x}}(\delta_{y_0},\mathbb Q)
        \leq
        \frac{1}{\alpha}
    \right\},
\]
The source measure $\delta_{y_0}$ is only an auxiliary zero-depth anchor, not a nominal estimate of the future conditional law; the substantive object is the learned pseudometric $d_{\nu,x}$.

More precisely, we can define the conformal-depth map
$
    T_{\nu,x}(y)
    \coloneqq
    h_\nu(y;x).
$
For any probability measures $\mathbb Q_1$ and $\mathbb Q_2$ with finite first $h_\nu$-moments,
\[
    W_{1,d_{\nu,x}}(\mathbb Q_1,\mathbb Q_2)
    =
    W_1\!\left(
        (T_{\nu,x})_\#\mathbb Q_1,
        (T_{\nu,x})_\#\mathbb Q_2
    \right).
\]
Thus, the learned geometry can also be interpreted as the one-dimensional Wasserstein geometry of conformal depth and deliberately discards distinctions that are invisible to the conformal path. In particular, two probability laws have zero conformal-depth distance whenever they induce the same distribution of $h_\nu(Y;x)$, even if the original laws differ.

A key implication of this geometric representation is that the Wasserstein radius $1/\alpha$ is not an arbitrary robustness parameter but has a direct statistical interpretation: $\alpha$ is the desired miscoverage level for the future latent conditional law, so the resulting ambiguity set contains that law with probability at least $1-\alpha$. Thus, selecting a smaller $\alpha$ simultaneously requests greater confidence and enlarges the corresponding conformal-depth Wasserstein ball. Unlike conventional Wasserstein DRO, where the radius is typically calibrated through empirical tuning or concentration bounds, the radius in our construction is determined directly by the desired coverage guarantee. 
The aggregation rule $\nu$ still determines the geometry and tightness of the ambiguity set, but the radius itself is fixed by the decision maker’s prescribed confidence level.

\section{Proof of Theorem~\ref{thm:shell-reformulation}}
\label{app:proof-shell-reformulation}

\begin{proof}
Fix $x$ and define the index set of nonempty shells
\[
\mathcal J(x)
=
\{j\in\{0,\ldots,K\}:\mathcal{A}_j(x)\neq\varnothing\}.
\]
Only these shells are retained below. 
Fix $z\in\mathcal Z(x)$, and suppress the dependence on $(x,\mathcal D_n,z)$ when no confusion can arise. By construction, $\mathcal{A}_0(x),\ldots,\mathcal{A}_K(x)$ form a partition of $\mathcal Y$, and
\[
    h_\nu(y;x)=h_j
    \qquad
    \text{for every }y\in \mathcal{A}_j(x).
\]

\paragraph{Step 1: Exact reduction to shell probabilities.}
Every probability measure $\mathbb Q\in\mathcal P(\mathcal Y)$ can be represented as
\[
    \mathbb Q
    =
    \sum_{j=0}^Kp_j\mathbb Q_j,
\]
where
\[
    p_j=\mathbb Q(\mathcal{A}_j(x))
\]
and $\mathbb Q_j$ is supported on $\mathcal{A}_j(x)$. When $p_j>0$, one may take
\[
    \mathbb Q_j(\mathcal{B})
    =
    \frac{\mathbb Q(\mathcal{B}\cap \mathcal{A}_j(x))}{p_j}
\]
for every measurable $\mathcal{B}\subseteq\mathcal Y$. When $p_j=0$, the choice of $\mathbb Q_j$ is immaterial. Conversely, every such mixture defines a probability measure on $\mathcal Y$.

Since $h_\nu=h_j$ on $\mathcal{A}_j(x)$, the ambiguity-set constraint satisfies
\[
    \int_{\mathcal Y}
        h_\nu(y;x)\,
        \mathbb Q(\mathrm{d}y)
    =
    \sum_{j=0}^Kh_jp_j.
\]
The conformal upper risk can therefore be expressed as
\begin{equation}
\label{eq:proof-mixture-formulation}
\begin{aligned}
    \mathcal R_{\alpha,\nu,x}\bigl(c(z,\cdot)\bigr)
    =
    \sup_{p,\mathbb Q_0,\ldots,\mathbb Q_K}
    \quad&
    \sum_{j=0}^K
        p_j
        \int_{\mathcal Y}
            c(z,y)\,
            \mathbb Q_j(\mathrm{d}y)
    \\
    \mathrm{s.t.}\quad&
    p\in\mathbb R_+^{K+1},
    \\
    &
    \sum_{j=0}^Kp_j=1,
    \\
    &
    \sum_{j=0}^Kh_jp_j\leq\frac{1}{\alpha},
    \\
    &
    \mathbb Q_j\in\mathcal P(\mathcal Y),
    \qquad j \in \mathcal J(x),
    \\
    &
    \mathbb Q_j(\mathcal{A}_j(x))=1,
    \qquad j \in \mathcal J(x).
\end{aligned}
\end{equation}

For fixed $p$, the measures $\mathbb Q_0,\ldots,\mathbb Q_K$ can be optimized independently. For each shell, consider
\begin{equation}
\label{eq:proof-within-shell-problem}
\begin{aligned}
    \sup_{\mathbb Q_j}
    \quad&
    \int_{\mathcal Y}
        c(z,y)\,
        \mathbb Q_j(\mathrm{d}y)
    \\
    \mathrm{s.t.}\quad&
    \mathbb Q_j\in\mathcal P(\mathcal Y),
    \\
    &
    \mathbb Q_j(\mathcal{A}_j(x))=1.
\end{aligned}
\end{equation}
The optimal value of \eqref{eq:proof-within-shell-problem} is
\[
    \sup_{y\in \mathcal{A}_j(x)}c(z,y)
    =
    L_j(z;x).
\]
Indeed, no probability measure supported on $\mathcal{A}_j(x)$ can have an expected cost larger than the pointwise supremum over that shell, while point masses at maximizers or approximate maximizers attain or approach this value.

Substituting these shell-wise optimal values into \eqref{eq:proof-mixture-formulation} gives
\begin{equation}
\label{eq:proof-shell-primal}
\begin{aligned}
\mathcal R_{\alpha,\nu,x}(c(z,\cdot))
=
\max_{\{p_j\}_{j\in\mathcal J(x)}}
\quad&
\sum_{j\in\mathcal J(x)}p_jL_j(z;x)
\\
\mathrm{s.t.}\quad&
p_j\geq0,
\qquad j\in\mathcal J(x),
\\
&
\sum_{j\in\mathcal J(x)}p_j=1,
\\
&
\sum_{j\in\mathcal J(x)}h_jp_j
\leq\frac1\alpha.
\end{aligned}
\end{equation}
Under Assumption~\ref{ass:finite-loss}, all $L_j(z;x)$ are finite. The feasible set is nonempty because $h_0=0$ and $p_0=1$ is feasible. It is also compact as a closed subset of the probability simplex. Thus, the supremum is attained, and \eqref{eq:proof-shell-primal} proves \eqref{eq:shell-lp}.

\paragraph{Step 2: Dual reformulation.}
Introduce a free dual variable $t\in\mathbb R$ for the normalization constraint and a nonnegative dual variable $\lambda\geq0$ for the incompatibility constraint. The Lagrangian of \eqref{eq:proof-shell-primal} is
\begin{align}
    \mathcal L(p,t,\lambda)
    &=
    \sum_{j=0}^Kp_jL_j(z;x)
    +
    t\left(1-\sum_{j=0}^Kp_j\right)
    +
    \lambda
    \left(
        \frac{1}{\alpha}
        -
        \sum_{j=0}^Kh_jp_j
    \right)
    \nonumber\\
    &=
    t+\frac{\lambda}{\alpha}
    +
    \sum_{j=0}^K
        p_j
        \bigl[
            L_j(z;x)-t-\lambda h_j
        \bigr].
    \label{eq:proof-shell-lagrangian}
\end{align}
Maximizing the Lagrangian over $p\in\mathbb R_+^{K+1}$ gives a finite value if and only if
\[
    t+\lambda h_j
    \geq
    L_j(z;x),
    \qquad
    j \in \mathcal J(x).
\]
The dual problem is therefore
\begin{equation}
\label{eq:proof-shell-dual}
\begin{aligned}
    \inf_{t,\lambda}
    \quad&
    t+\frac{\lambda}{\alpha}
    \\
    \mathrm{s.t.}\quad&
    t\in\mathbb R,
    \\
    &
    \lambda\geq0,
    \\
    &
    t+\lambda h_j
    \geq
    L_j(z;x),
    \qquad j \in \mathcal J(x).
\end{aligned}
\end{equation}
The primal problem is feasible and bounded. Strong linear-programming duality therefore implies that \eqref{eq:proof-shell-primal} and \eqref{eq:proof-shell-dual} have the same optimal value.

For any fixed $\lambda\geq0$, the smallest feasible value of $t$ is
\[
    t(\lambda)
    =
    \max_{j=0,\ldots,K}
    \bigl[
        L_j(z;x)-\lambda h_j
    \bigr].
\]
Eliminating $t$ from \eqref{eq:proof-shell-dual} yields
\begin{equation}
\label{eq:proof-one-dimensional-dual}
\begin{aligned}
    \mathcal R_{\alpha,\nu,x}\bigl(c(z,\cdot)\bigr)
    =
    \inf_{\lambda}
    \quad&
    \frac{\lambda}{\alpha}
    +
    \max_{j=0,\ldots,K}
    \bigl[
        L_j(z;x)-\lambda h_j
    \bigr]
    \\
    \mathrm{s.t.}\quad&
    \lambda\geq0,
\end{aligned}
\end{equation}
which proves \eqref{eq:one-dimensional-dual}.

\paragraph{Step 3: Sparsity of a worst-case law.}
Introduce a slack variable $s\geq0$ and write the shell linear program in standard form:
\begin{equation}
\label{eq:proof-shell-standard-form}
\begin{aligned}
    \max_{p,s}
    \quad&
    \sum_{j=0}^Kp_jL_j(z;x)
    \\
    \mathrm{s.t.}\quad&
    \sum_{j=0}^Kp_j=1,
    \\
    &
    \sum_{j=0}^Kh_jp_j+s=\frac{1}{\alpha},
    \\
    &
    p\in\mathbb R_+^{K+1},
    \\
    &
    s\geq0.
\end{aligned}
\end{equation}
The standard-form problem has at most two linearly independent equality constraints. By the fundamental theorem of linear programming, there exists an optimal basic feasible solution $(p^\star,s^\star)$ satisfying
\[
    \bigl|\{j:p_j^\star>0\}\bigr|
    \leq2.
\]

If the regional suprema are attained, choose
\[
    y_j^\star
    \in
    \argmax_{y\in \mathcal{A}_j(x)}c(z,y)
\]
for every $j$ such that $p_j^\star>0$, and define
\[
    \mathbb Q^\star
    =
    \sum_{j:p_j^\star>0}
        p_j^\star\delta_{y_j^\star}.
\]
Then
\[
    \int_{\mathcal Y}
        h_\nu(y;x)\,
        \mathbb Q^\star(\mathrm{d}y)
    =
    \sum_{j=0}^Kh_jp_j^\star
    \leq
    \frac{1}{\alpha},
\]
and
\[
    \int_{\mathcal Y}
        c(z,y)\,
        \mathbb Q^\star(\mathrm{d}y)
    =
    \sum_{j=0}^Kp_j^\star L_j(z;x)
    =
    \mathcal R_{\alpha,\nu,x}\bigl(c(z,\cdot)\bigr).
\]
Thus, an optimal worst-case law can be chosen with support in at most two shells. If the regional suprema are not attained, approximate maximizers yield an $\varepsilon$-optimal law supported on at most two shells for every $\varepsilon>0$.
\end{proof}

\section{Proof of Corollary~\ref{thm:risk-certificate}}
\label{app:proof-risk-certificate}

\begin{proof}
Let
\[
E\coloneqq \left\{
\mathbb{Q}_{n+1}
\in
\mathcal U_{\alpha}^{\nu}(X_{n+1})
\right\}.
\]
By Theorem~1, $\mathbb P(E)\ge 1-\alpha$. On the event $E$, for every
$z\in\mathcal Z(X_{n+1})$,
\[
\int_{\mathcal Y} c(z,y)\,
\mathbb{Q}_{n+1}(\d y\mid X_{n+1})
\le
\sup_{\mathbb{Q}\in\mathcal U_{\alpha}^{\nu}(X_{n+1})}
\int_{\mathcal Y} c(z,y)\,\mathbb{Q}(\d y)
=
\mathcal R_{\alpha,\nu,X_{n+1}}\bigl(c(z,\cdot)\bigr).
\]
Thus, the inequality holds simultaneously for all feasible $z$ with probability at least $1-\alpha$.

In particular, substituting the data-dependent decision $z=\widehat z_n(X_{n+1})$ and using the definition of $\widehat V_n(X_{n+1})$ gives
\[
\int_{\mathcal Y}
c\bigl(\widehat z_n(X_{n+1}),y\bigr)
\mathbb{Q}_{n+1}(\d y)
\le
\widehat V_n(X_{n+1})
\]
on $E$. Since $\mathbb P(E)\ge 1-\alpha$, the claimed probability guarantee follows.
\end{proof}

\section{Proof of Theorem~\ref{thm:cardinality-reformulation}}
\label{app:proof-thm:cardinality-reformulation}

\begin{proof}
The proof proceeds in four steps.

\paragraph{Step 1: Reduction to the finite conformal-rank grid.}
For any $\gamma\in(0,\alpha)$, there exists a unique
$r\in\{0,\ldots,m\}$ such that
\[
    \gamma\in
    \left[
        \frac{r}{m+1},
        \frac{r+1}{m+1}
    \right).
\]
It follows that
\[
    (m+1)(1-\gamma)\in(n-r,m+1-r]
\]
and hence
\[
    k_\gamma
    =
    \left\lceil (m+1)(1-\gamma)\right\rceil
    =
    m+1-r.
\]
Therefore, the conformal quantile and prediction region are constant
throughout this interval.

Suppose first that $r\geq1$ and let
\[
    \widetilde\gamma=\frac{r}{m+1}.
\]
Because $\widetilde\gamma\leq\gamma$ and
$
    \mathcal C_{\widetilde\gamma}(x)
    =
    \mathcal C_\gamma(x),
$
we have, for every $y\in\mathcal Y$,
\[
    \frac{
        \mathbbm{1}\!\left\{
            y\notin\mathcal C_{\widetilde\gamma}(x)
        \right\}
    }{\widetilde\gamma}
    \geq
    \frac{
        \mathbbm{1}\!\left\{
            y\notin\mathcal C_\gamma(x)
        \right\}
    }{\gamma}.
\]
Thus, moving an atom of the aggregation rule from $\gamma$ to
$\widetilde\gamma$ leaves the corresponding conformal region unchanged
and weakly increases the pointwise incompatibility penalty. Consequently,
the resulting ambiguity set can only shrink, and its robust risk cannot
increase.

When $r=0$, the conformal quantile equals $+\infty$ by convention, so
\[
    \mathcal C_\gamma(x)=\mathcal Y.
\]
Such an atom contributes zero incompatibility. It can therefore be removed
and the remaining weights renormalized without increasing the robust risk.
If it is the only atom, it may be replaced by any candidate level in the
nontrivial rank grid, which is nonempty because $\alpha>1/(m+1)$.

Applying this argument to every atom of any
$\nu\in\mathcal P_K((0,\alpha))$, and merging atoms mapped to the same
location, shows that the infimum in
\eqref{eq:joint-geometry-design} is unchanged when the support of $\nu$
is restricted to
\[
    \Gamma_m
    =
    \left\{
        \frac{r}{m+1}:
        r=1,\ldots,m,\;
        \frac{r}{m+1}<\alpha
    \right\}.
\]
If two candidate levels generate the same conformal region, moving their
combined weight to the smaller level weakly increases the incompatibility
penalty and cannot increase the robust risk. We may therefore remove
duplicates and write
\[
    \Gamma_m
    =
    \{\gamma_1<\cdots<\gamma_R\}.
\]

\paragraph{Step 2: Shell representation on the finite grid.}
Every aggregation rule supported on at most $K$ points of $\Gamma_m$ can
be represented as
\[
    \nu_w
    =
    \sum_{i=1}^R w_i\delta_{\gamma_i},
    \qquad
    w\in\Delta_R,
    \qquad
    \|w\|_0\leq K,
\]
where
\[
    \Delta_R
    =
    \left\{
        w\in\mathbb R_+^R:
        \sum_{i=1}^R w_i=1
    \right\}.
\]

Apply the shell construction from
Section~\ref{sec:tractable-dro} to the full ordered family
\[
    \mathcal C_{\gamma_1}(x)
    \supseteq\cdots\supseteq
    \mathcal C_{\gamma_R}(x).
\]
For every $j\in\mathcal J(x)$ and every
$y\in\mathcal A_j(x)$, the incompatibility penalty is constant and equals
\[
    h_j(w)
    =
    \sum_{i=R-j+1}^{R}\frac{w_i}{\gamma_i},
\]
with the convention $h_0(w)=0$.

Theorem~\ref{thm:shell-reformulation}, applied to this finite shell
partition, therefore gives
\[
    \mathcal R_{\alpha,\nu_w,x}\bigl(c(z,\cdot)\bigr)
    =
    \max_{\{p_j\}_{j\in\mathcal J(x)}}
    \sum_{j\in\mathcal J(x)}p_jL_j(z;x)
\]
subject to
\[
    \sum_{j\in\mathcal J(x)}p_j=1,
    \qquad
    \sum_{j\in\mathcal J(x)}p_jh_j(w)\leq\frac{1}{\alpha},
    \qquad
    p_j\geq0.
\]
Although some coordinates of $w$ may be zero, the full finite-grid
partition remains valid: adjacent shells receiving the same penalty may
be merged without changing the robust value.

Dualizing the incompatibility-budget constraint yields
\[
    \mathcal R_{\alpha,\nu_w,x}\bigl(c(z,\cdot)\bigr)
    =
    \inf_{\lambda\geq0}
    \left\{
        \frac{\lambda}{\alpha}
        +
        \max_{j\in\mathcal J(x)}
        \left[
            L_j(z;x)-\lambda h_j(w)
        \right]
    \right\}.
\]

\paragraph{Step 3: Epigraph reformulation and reparameterization.}
Using the dual form above, problem
\eqref{eq:joint-geometry-design} is equivalent to
\begin{align*}
    \inf_{\substack{
        z\in\mathcal Z(x),\;
        w\in\Delta_R,\\
        \lambda\geq0,\;
        t\in\mathbb R
    }}
    \quad&
    t+\frac{\lambda}{\alpha}
    \\
    \mathrm{s.t.}\quad&
    t+\lambda h_j(w)\geq L_j(z;x),
    \quad j\in\mathcal J(x),
    \\
    &
    \|w\|_0\leq K.
\end{align*}
The only nonlinear terms are the products $\lambda w_i$. Define
\[
    u_i=\lambda w_i,
    \qquad i=1,\ldots,R.
\]
Because $w\in\Delta_R$,
\[
    \sum_{i=1}^R u_i
    =
    \lambda\sum_{i=1}^R w_i
    =
    \lambda.
\]
Moreover,
\[
    \lambda h_j(w)
    =
    \lambda
    \sum_{i=R-j+1}^{R}\frac{w_i}{\gamma_i} \\
    =
    \sum_{i=R-j+1}^{R}\frac{u_i}{\gamma_i},
\]
and
\[
    \|u\|_0\leq\|w\|_0\leq K.
\]
Hence, every feasible solution of the preceding problem produces a
feasible solution of
\[
\begin{aligned}
    \inf_{\substack{
        z\in\mathcal Z(x),\;
        t\in\mathbb R,\;
        u\in\mathbb R_+^R
    }}
    \quad&
    t+\frac{1}{\alpha}\sum_{i=1}^R u_i
    \\
    \mathrm{s.t.}\quad&
    t+
    \sum_{i=R-j+1}^{R}\frac{u_i}{\gamma_i}
    \geq
    L_j(z;x),
    \qquad j\in\mathcal J(x),
    \\
    &
    \|u\|_0\leq K,
\end{aligned}
\]
with the same objective value.

\paragraph{Step 4: Reverse construction and recovery of the weights.}
Conversely, let $(z,t,u)$ be a feasible solution
and define
\[
    \lambda
    =
    \sum_{i=1}^R u_i.
\]
Suppose first that $\lambda>0$ and set
\[
    w_i=\frac{u_i}{\lambda},
    \qquad i=1,\ldots,R.
\]
Then $w\in\Delta_R$ and
\[
    \|w\|_0=\|u\|_0\leq K.
\]
Furthermore,
\[
    \lambda h_j(w)
    =
    \sum_{i=R-j+1}^{R}\frac{u_i}{\gamma_i},
    \qquad j\in\mathcal J(x),
\]
so $(z,w,\lambda,t)$ is feasible for the epigraph formulation and has
the same objective value.

If $\lambda=0$, then $u=0$, and the shell constraints reduce to
\[
    t\geq L_j(z;x),
    \qquad j\in\mathcal J(x).
\]
Choose any $w\in\Delta_R$ satisfying $\|w\|_0\leq K$ and set
$\lambda=0$. The resulting epigraph constraints and objective are
identical. Thus, the two formulations have the same feasible objective
values.

Finally, if $(z^\star,t^\star,u^\star)$ is optimal and
\[
    \lambda^\star
    =
    \sum_{i=1}^R u_i^\star
    >0,
\]
then the inverse transformation gives
\[
    w_i^\star
    =
    \frac{u_i^\star}{\lambda^\star}.
\]
Therefore, an optimal aggregation rule is
\[
    \widehat\nu_{m,K}
    =
    \sum_{\substack{
        i=1,\ldots,R\\
        u_i^\star>0
    }}
    \frac{u_i^\star}{\lambda^\star}
    \delta_{\gamma_i},
\]
which proves the recovery formula.
\end{proof}

\section{Proposed Algorithm}
\label{sec:proposed-algorithm}

Algorithm~\ref{alg:conformal-dro} summarizes an implementation of \texttt{Conformal-DRO} that permits context-dependent aggregation weights. An auxiliary tuning set $\mathcal D_m$ is used to specify a measurable map $x\mapsto\widehat\nu_{m,K}(x)$ under the support budget $K$. The entire map, together with the tuning-sample conformal regions, is then frozen. An independent calibration set $\mathcal D_n$ calibrates the resulting composite outcome score, after which the deployed decision is optimized. This additional calibration is essential: although the map is selected independently of the final calibration sample, its evaluation at $X_{n+1}$ depends on the future context. Therefore, Theorem~\ref{thm:multilevel-validity} does not directly justify using these weights with the original levelwise calibration. The construction below uses split conformal e-prediction to calibrate the incompatibility penalty \citep{vovk2025conformal}; its latent-law guarantee is established below.

We take $\alpha\in(0,1)$, use the candidate level domain $(0,\alpha)$, and assume $\alpha>1/(m+1)$, so that the tuning grid is nonempty. The score $s$ is fixed or fitted using data independent of the tuning sample and the final calibration and future observations. Conditional on the independent training data and $\mathcal D_m$, the final calibration and future observations satisfy the paper's exchangeable latent-pair model, including its conditional outcome sampling assumption. All tuning conventions, including tie-breaking and fallback rules, are fixed before final calibration and yield measurable functions. We assume nonempty decision sets, finite measurable regional losses, and measurable optimizers whenever an attained minimum is used.

\begin{algorithm}[!b]
\caption{\texttt{Conformal-DRO} with context-dependent weights}
\label{alg:conformal-dro}
\small
\begin{algorithmic}[1]
\REQUIRE Independent tuning and calibration samples $\mathcal D_m$ and $\mathcal D_n$; context $x$; nonconformity score $s$; miscoverage level $\alpha>1/(m+1)$; support budget $K\geq1$.
\ENSURE Decision $\widehat z_n(x)$, robust value $\widehat V_n(x)$, and context-dependent aggregation map $\widehat\nu_{m,K}(\cdot)$.

\STATE Using $\mathcal D_m$, construct the tuning conformal regions on the common finite rank grid
$
\Gamma_m
=\{r/(m+1):r=1,\ldots,m,\ r/(m+1)<\alpha\}
=\{\gamma_1<\cdots<\gamma_R\}.
$

\STATE For a generic context $\xi$, construct the finest nonempty tuning shells $\{\mathcal A_j^{\mathrm{tun}}(\xi)\}_{j\in\mathcal J^{\mathrm{tun}}(\xi)}$ and their regional losses $L_j^{\mathrm{tun}}(z;\xi)$ using $\mathcal D_m$.

\STATE Define $(z^\star(\xi),t^\star(\xi),u^\star(\xi))$ by solving
\[
\begin{aligned}
\min_{z\in\mathcal Z(\xi),\,t\in\mathbb R,\,u\in\mathbb R_+^R}
\quad& t+\frac{1}{\alpha}\sum_{i=1}^R u_i\\
\operatorname{s.t.}\quad&
t+\sum_{i=R-j+1}^R\frac{u_i}{\gamma_i}
\geq L_j^{\mathrm{tun}}(z;\xi),
\quad j\in\mathcal J^{\mathrm{tun}}(\xi),\\
&\|u\|_0\leq K.
\end{aligned}
\]

\STATE Set $\lambda^\star(\xi)=\sum_{i=1}^R u_i^\star(\xi)$ and recover
\[
\widehat\nu_{m,K}(\xi)
=\sum_{i:u_i^\star(\xi)>0}
\frac{u_i^\star(\xi)}{\lambda^\star(\xi)}\,\delta_{\gamma_i}
=\sum_{i=1}^R\widehat w_i(\xi)\,\delta_{\gamma_i}.
\]
If $\lambda^\star(\xi)=0$, or the tuning problem has no finite attained optimum, set $\widehat\nu_{m,K}(\xi)=\delta_{\gamma_1}$.

\STATE Freeze the map specified by Steps~2--4 and all tuning regions. Define the incompatibility penalty
\[
h_m(\xi,y)
=\sum_{i=1}^R\frac{\widehat w_i(\xi)}{\gamma_i}
\mathbbm 1\{y\notin\mathcal C_{\gamma_i}(\xi;\mathcal D_m)\}.
\]

\STATE Using only $\mathcal D_n$, evaluate $H_i=h_m(X_i,Y_i)$, $i=1,\ldots,n$, and set $\widetilde{H}_n=\sum_{i=1}^n H_i$. Define $\psi_n(a)=(n+1)a/(\widetilde{H}_n+a)$ for $a\geq0$, with $0/0:=0$.

\STATE At context $x$, form the nonempty shells $\{\mathcal A_j(x)\}_{j\in\mathcal J(x)}$ induced by the selected \emph{tuning} regions. On each shell set $a_j(x)=h_m(x,y)$ for $y\in\mathcal A_j(x)$, $h_j(x)=\psi_n(a_j(x))$, and $L_j(z;x)=\sup_{y\in\mathcal A_j(x)}c(z,y)$. If $\min_{j\in\mathcal J(x)}h_j(x)>1/\alpha$, set every $h_j(x)=0$.

\STATE Obtain $(\widehat z_n(x),\widehat t_n(x),\widehat\lambda_n(x))$ by solving
\[
\begin{aligned}
\min_{z\in\mathcal Z(x),\,t\in\mathbb R,\,\lambda\geq0}
\quad& t+\frac{\lambda}{\alpha}\\
\operatorname{s.t.}\quad&
t+\lambda h_j(x)\geq L_j(z;x),
\qquad j\in\mathcal J(x).
\end{aligned}
\]

\STATE Set $\widehat V_n(x)=\widehat t_n(x)+\widehat\lambda_n(x)/\alpha$.
\end{algorithmic}
\end{algorithm}

The first five steps specify the tuning stage. Step~1 reduces the candidate levels to the finite conformal-rank grid used in Theorem~\ref{thm:cardinality-reformulation}. We retain a common ordered grid across contexts and omit empty shells at each context. Duplicate regions can be retained without affecting the tuning reformulation; if they are merged locally, their combined tuning weight may be assigned to the smallest corresponding level. Steps~2--4 define a tuning routine for a generic input $\xi$, rather than a single aggregation rule for one deployment context. Thus, this same routine can subsequently be evaluated at every calibration context $X_i$ and at the future context $x$. It need not be evaluated at every possible context in advance, but its definition cannot be changed using $\mathcal D_n$ or the particular deployment context.

Step~3 retains the cardinality-constrained tuning formulation in Theorem~\ref{thm:cardinality-reformulation}. At each context,
$$
u_i(\xi)=\lambda(\xi)w_i(\xi),
$$
so $u_i^\star(\xi)>0$ selects level $\gamma_i$, while $\|u^\star(\xi)\|_0\leq K$ limits the number of active levels at that context. When $\lambda^\star(\xi)>0$, Step~4 recovers
$$
\widehat w_i(\xi)
=\frac{u_i^\star(\xi)}{\sum_{\ell=1}^R u_\ell^\star(\xi)},
\qquad
\widehat\nu_{m,K}(\xi)
=\sum_{i=1}^R\widehat w_i(\xi)\delta_{\gamma_i}.
$$
Both the weights and their active support may therefore vary with context. The preliminary decision $z^\star(\xi)$ is used only to select a geometry relevant to the downstream loss. Step~3 is a geometry-selection surrogate: its objective is not the final robust value after composite-score calibration. If the tuning problem is unbounded or fails to attain a finite optimum, the specified one-point fallback still defines an admissible score for calibration.

Step~5 freezes the entire incompatibility penalty $h_m$. It is nonnegative and finite, since
$$
0\leq h_m(\xi,y)
\leq\sum_{i=1}^R\frac{\widehat w_i(\xi)}{\gamma_i}
\leq\frac{1}{\gamma_1}.
$$
No context-conditional moment bound is assumed for this uncalibrated score. Step~6 instead calibrates it as a whole, using each observation's own context in $h_m(X_i,Y_i)$. The resulting outcome penalty before the empty-set fallback is
\begin{equation}
\widetilde h_n(y;x)
=\psi_n\bigl(h_m(x,y)\bigr)
=\frac{(n+1)h_m(x,y)}{\widetilde{H}_n+h_m(x,y)},
\label{eq:contextual-composite-calibration}
\end{equation}
with the same $0/0:=0$ convention. The candidate score in the denominator is part of the calibration rule. The tuning regions and weight map remain fixed; the final sample supplies the scalar $\widetilde{H}_n$, rather than new levelwise conformal regions.

Steps~7--9 perform the final ambiguity-set construction and decision optimization. Because $h_m(x,\cdot)$ is constant on each selected tuning shell, so is $\widetilde h_n(\cdot;x)$. Let $\widehat h_n(y;x)=h_j(x)$ for $y\in\mathcal A_j(x)$, where $h_j(x)$ includes the fallback in Step~7, and define
\begin{equation}
\widehat{\mathcal U}_n^\alpha(x)
=\left\{\mathbb Q\in\mathcal P(\mathcal Y):
\int_{\mathcal Y}\widehat h_n(y;x)\,\mathbb Q(\d y)
\leq\frac{1}{\alpha}\right\}.
\label{eq:contextual-ambiguity-set}
\end{equation}
The preliminary moment set is nonempty exactly when at least one nonempty shell has penalty at most $1/\alpha$. Otherwise Step~7 replaces it by $\mathcal P(\mathcal Y)$ by setting all penalties to zero. This enlargement preserves coverage and provides a conservative fallback without assuming that the innermost conformal region is nonempty.

The shell-mass linear program and its dual continue to apply with these calibrated penalties. Consequently, under the stated finite-loss and attainment conditions, Step~8 yields
$$
\widehat V_n(x)
=\min_{z\in\mathcal Z(x)}
\sup_{\mathbb Q\in\widehat{\mathcal U}_n^\alpha(x)}
\int_{\mathcal Y}c(z,y)\,\mathbb Q(\d y).
$$
Here, $t$ upper-bounds the penalized regional losses and $\lambda$ is the dual variable for the incompatibility budget. Regional suprema must be computed exactly or replaced by certified upper bounds. A measurable feasible solution of the final epigraph problem also provides a valid upper certificate if optimization is approximate, although its objective need not equal the optimal robust value.

To establish finite-sample validity, condition throughout on any independent score-training data and fixed auxiliary randomization. Conditional on $\mathcal D_m$, the scores
$$
H_i=h_m(X_i,Y_i),\qquad i=1,\ldots,n+1,
$$
are exchangeable because the same frozen function is applied to every observation. Write $\widetilde{H}_{n+1}=\sum_{i=1}^{n+1}H_i$. Exchangeability and the $0/0:=0$ convention give
\begin{equation}
\begin{aligned}
\mathbb E\!\left[
\widetilde h_n(Y_{n+1};X_{n+1})\mid\mathcal D_m\right]
&=\mathbb E\!\left[
\frac{(n+1)H_{n+1}}{\widetilde{H}_{n+1}}\mid\mathcal D_m\right]\\
&=\mathbb E\!\left[
\sum_{i=1}^{n+1}\frac{H_i}{\widetilde{H}_{n+1}}\mid\mathcal D_m\right]\\
&=\mathbb P\{\widetilde{H}_{n+1}>0\mid\mathcal D_m\}\leq1.
\end{aligned}
\label{eq:contextual-moment-validity}
\end{equation}
Since the empty-set fallback only decreases the penalty, $0\leq\widehat h_n\leq\widetilde h_n$. The conditional outcome sampling model and the tower property therefore imply
$$
\mathbb E\!\left[
\int_{\mathcal Y}\widehat h_n(y;X_{n+1})\,
\mathbb Q_{n+1}(\d y)\mid\mathcal D_m\right]
=\mathbb E\!\left[
\widehat h_n(Y_{n+1};X_{n+1})\mid\mathcal D_m\right]
\leq1.
$$
Applying Markov's inequality yields
\begin{equation}
\mathbb P\left\{
\mathbb Q_{n+1}\in\widehat{\mathcal U}_n^\alpha(X_{n+1})
\mid\mathcal D_m\right\}\geq1-\alpha.
\label{eq:contextual-law-coverage}
\end{equation}
On this containment event, the robust objective bounds the expected cost for every feasible decision simultaneously. Applying the same containment argument underlying Corollary~\ref{thm:risk-certificate} and then averaging over the tuning and training data gives
\begin{equation}
\mathbb P\left\{
\int_{\mathcal Y}c(\widehat z_n(X_{n+1}),y)\,
\mathbb Q_{n+1}(\d y)
\leq\widehat V_n(X_{n+1})\right\}\geq1-\alpha.
\label{eq:contextual-risk-certificate}
\end{equation}
Thus, $\widehat V_n(X_{n+1})$ provides a finite-sample upper certificate for the expected cost of the selected decision under the future latent law, while allowing context-dependent weights. The guarantee averages over the final calibration sample, future context, and future latent law; it does not assert coverage conditional on each context or on a fixed realized calibration sample.

The support budget $K$ controls the resolution of the tuning geometry at each context. At most $K$ selected levels induce at most $K+1$ nonempty shells, and composite-score calibration adds no new shell boundaries. Larger values of $K$ enlarge the tuning class, but need not monotonically improve the final calibrated robust value or the deployed decision's cost. Since $K$ is an upper bound, fewer than $K$ levels may be used. Finally, $\widehat h_n\leq n+1$, so if $\alpha\leq1/(n+1)$ the calibrated ambiguity set is necessarily $\mathcal P(\mathcal Y)$. Nontrivial exclusions under this normalization therefore require $\alpha>1/(n+1)$.

\section{Reproducibility Details for the Synthetic Experiments in Section~\ref{sec:synthetic-experimental-setup}}
\label{app:synthetic-experimental-details}

This section documents the experimental configuration underlying the synthetic results in Section~\ref{sec:synthetic-experimental-setup}, including the data-generating distributions, data splitting and randomization, conformal calibration, aggregation-rule selection, and finite-sample and population-oracle optimization. It also reports additional shell-allocation results in Figure~\ref{fig:add-pop-syn-linear-shells} and clarifies the scope of the implemented comparisons and evaluation metrics to facilitate accurate interpretation and reproduction.

\paragraph{Data-generating distributions.} Let $Z$ denote an untruncated random variable. The two outcome laws used in the experiment are
\[
Y\sim\mathcal L\!\left(Z\,\middle|\,-5\leq Z\leq5\right),
\qquad
Z\sim\mathcal N(0,1)
\quad\text{or}\quad
Z\sim t_3(\mathrm{loc}=0,\mathrm{scale}=1).
\]
Thus, all densities, distribution functions, quantiles, and simulated observations correspond to the truncated and renormalized law, rather than to an unbounded law whose observations are subsequently clipped. There are no covariates $X$, and hence no conditional data-generating mechanism. The downstream problem uses $c(z,y)=zy$, $\mathcal Y=[-5,5]$, and $\mathcal Z=[0.5,1.5]$. The bounded outcome and decision sets ensure that the linear cost remains bounded.

\paragraph{Data splitting and randomization.} The population-oracle experiments use the known truncated distribution directly and therefore require no simulated observations. Each finite-sample experiment uses one auxiliary tuning sample of size $n_{\mathrm{tune}}=500$ and one independent final calibration sample of size $n_{\mathrm{cal}}=500$. There is no separate training sample because no predictive model is estimated, and there is no test sample. Observations are generated by inverse-transform sampling from the appropriate truncated distribution. For each distribution, the implementation applies \texttt{numpy.random.SeedSequence} to the experiment seed and spawns two random streams, the first for tuning and the second for final calibration. The seed is $12026$ for the Gaussian experiment and $13026$ for the Student-$t_3$ experiment. A single tuning--calibration realization is generated for each distribution and reused for all $K=1,\ldots,100$. The code does not contain a loop over independent Monte Carlo trials.

\paragraph{Prediction rule and conformal calibration.} The predictive center is fixed at zero, and the nonconformity score is $S(y)=|y|$. There is therefore no prediction architecture, loss function, optimizer, regularization parameter, epoch count, or model-training procedure. For a sample of size $n$ and a candidate miscoverage level $\gamma$, the conformal threshold is the order statistic with rank $k_\gamma=\lceil(n+1)(1-\gamma)\rceil$. The resulting outcome region is
\[
\mathcal C_\gamma
=
\left[
\max\{-5,-\widehat q_{1-\gamma}\},
\min\{5,\widehat q_{1-\gamma}\}
\right].
\]
Candidate levels are the conformal rank values $\gamma=r/(n+1)<\alpha$. With $n=500$ and $\alpha=0.2$, this produces the $100$ levels $r=1,\ldots,100$. If tied scores produce identical consecutive regions, the implementation retains the smallest corresponding value of $\gamma$. The saved results contain all $100$ nonduplicate levels for both data-generating distributions.

\paragraph{Selection of the aggregation rule.} For each support budget $K$, a dynamic program selects at most $K$ levels from the tuning-sample conformal path by minimizing the exact cardinality-constrained upper-Riemann objective induced by the positive linear cost. If $\widehat q_k^+$ denotes the upper endpoint associated with the $k$th selected level and $\widehat q_0^+=5$, the unnormalized allocation and its normalized aggregation weights are
\[
u_k
=
\gamma_k\bigl(\widehat q_{k-1}^+-\widehat q_k^+\bigr),
\qquad
w_k
=
\frac{u_k}{\sum_{\ell}u_\ell}.
\]
The selected levels and weights are then held fixed. The final calibration sample is used only to recompute the corresponding conformal regions and their induced shells. This separation ensures that selection of the aggregation rule does not reuse the final calibration observations. Although the optimization allows fewer than $K$ active levels, the saved results select exactly $K$ levels for every $K=1,\ldots,100$ in both distributions.

\paragraph{Finite-sample robust optimization.} For fixed tuned levels and weights, the implementation computes the shellwise suprema of the unit-decision linear loss and solves the resulting shell-mass linear program with the HiGHS backend of \texttt{scipy.optimize.linprog}. It also solves the corresponding epigraph dual independently and records the absolute primal--dual gap. Because the decision interval is strictly positive, the optimized decision is the lower endpoint when the unit robust value is nonnegative and the upper endpoint otherwise. The saved Gaussian and Student-$t_3$ value curves select $z=0.5$ for every $K$. Across the saved finite-sample results, the largest recorded primal--dual gap is approximately $1.07\times10^{-14}$.

\paragraph{Population-oracle optimization.} The population comparator uses the exact equal-tailed regions $\mathcal C_\gamma^\star=[F^{-1}(\gamma/2),F^{-1}(1-\gamma/2)]$ of the truncated distribution. The joint population optimization constrains the levels to $5\times10^{-4}\leq\gamma_1<\cdots<\gamma_K\leq0.1995$, with a minimum adjacent gap of $10^{-5}$. Population allocation results are computed for $K\in\{1,2,4,8,16,32\}$. The nonlinear program is solved by SLSQP with $50$ initializations, a function tolerance of $10^{-10}$, and at most $5000$ iterations per initialization. Four initializations use deterministic linear, quadratic, square-root, and geometric grids of miscoverage levels; the remaining $46$ use randomly generated ordered levels and decisions sampled uniformly from their feasible ranges. The best feasible solution is followed by at most $30$ bounded coordinate-polishing sweeps. A solution is treated as feasible when its maximum constraint violation is at most $2\times10^{-7}$.

For distribution index $d\in\{0,1\}$, with $d=0$ for the Gaussian law and $d=1$ for the Student-$t_3$ law, the population multistart seed is $2026+1000d+K$. Thus, the seeds are $2026+K$ for the Gaussian problems and $3026+K$ for the Student-$t_3$ problems. The population robust-value curve for $K=1,\ldots,100$ is computed separately by the specialized one-dimensional shooting recursion for the positive-linear, equal-tailed design, subject to the same bounds and minimum-gap constraint. This avoids applying the high-dimensional multistart solver separately at every value of $K$.

\paragraph{Comparators and inactive baseline code.} The only comparator executed by the current experiment driver is the population oracle. The module also contains an implementation of a traditional residual-based one-Wasserstein DRO benchmark, but the main experiment does not call it and no Wasserstein baseline result is included in the reported output. If invoked, that implementation would use the calibration observations as the nominal empirical distribution and the tuning observations to select the smallest radius in a user-supplied candidate grid that is at least the empirical one-Wasserstein distance between the two residual samples. The candidate-radius grid is not specified anywhere in the current experiment configuration. Consequently, a Wasserstein comparison cannot be reproduced or reported without an additional radius grid and an explicit call to the baseline routine.

\paragraph{Reported quantities and comparability.} For each $K$, the saved value tables record the population-oracle objective, the finite-sample split-conformal objective and decision, the tuning objective and decision, the selected support size, miscoverage levels, weights, tuning and calibration contour endpoints, sample sizes, random seed, and primal--dual gap. Allocation figures are reported for $K\in\{1,2,4,8\}$, while robust-value curves are reported for every $K=1,\ldots,100$. Within a distribution, all split-conformal values use the same tuning and calibration observations, and all methods use the same outcome law, cost, decision set, value of $\alpha$, and support budget $K$. The population comparator has direct access to the true law and is therefore an oracle reference rather than a data-matched empirical method.

\paragraph{Unimplemented evaluation quantities.} The current code does not define or compute decision regret. In particular, it does not specify whether regret should be measured relative to a population-optimal stochastic decision, a population robust decision with the same $K$, or another oracle, and it does not evaluate decisions on independent test outcomes. The code likewise does not estimate outcome-level coverage, ambiguity-set coverage of a latent law, or coverage of the optimized cost certificate. Since there is no test sample and no repeated-trial loop, neither regret nor coverage is aggregated across trials, and no means, standard errors, confidence intervals, or significance tests are reported. The value $\alpha=0.2$ is the nominal ambiguity-set miscoverage level used by the method; it is not an empirically estimated coverage rate in the present experiment. To add the requested empirical evaluation, one must specify the regret comparator and loss-evaluation rule, the relevant validity event, the test sample size, the number of independent repetitions, and the desired across-trial summaries. Software versions, hardware, and the computational platform are also not recorded by the current implementation.

\paragraph{Additional results}

\begin{figure}[!t]
    \centering
    \begin{subfigure}[t]{\linewidth}
        \centering
        \includegraphics[width=.24\linewidth]{imgs/allocation_linear_truncated_normal_loc0_scale1_ym5_5_K1.pdf}
        \includegraphics[width=.24\linewidth]{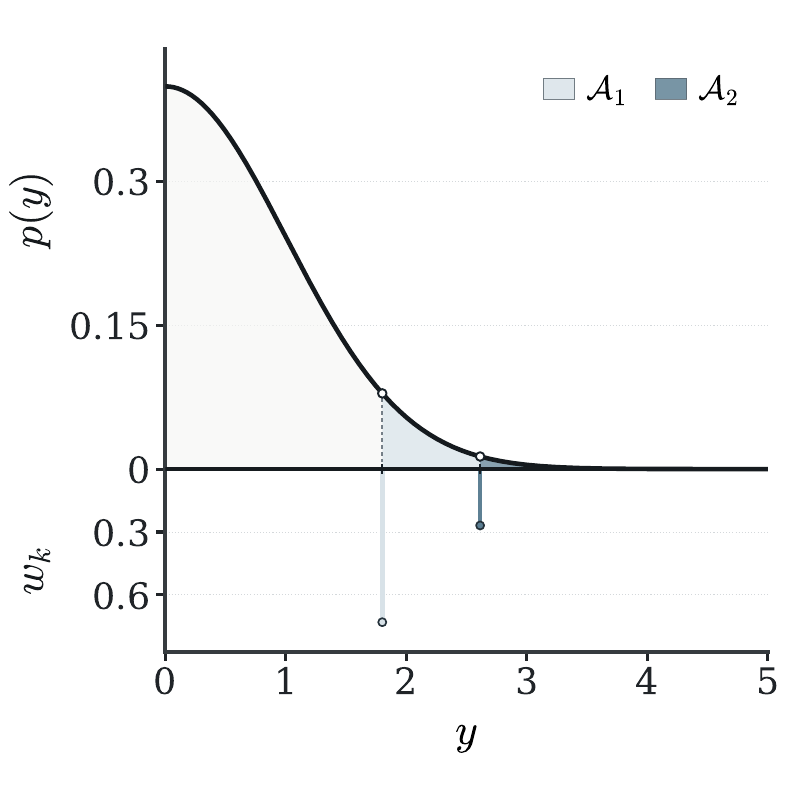}
        \includegraphics[width=.24\linewidth]{imgs/allocation_linear_truncated_normal_loc0_scale1_ym5_5_K4.pdf}
        \includegraphics[width=.24\linewidth]{imgs/allocation_linear_truncated_normal_loc0_scale1_ym5_5_K8.pdf}
        \caption{Population-oracle under Gaussian}
    \end{subfigure}
    \vfill
    \begin{subfigure}[t]{\linewidth}
        \centering
        \includegraphics[width=.24\linewidth]{imgs/allocation_split_conformal_linear_truncated_normal_loc0_scale1_ym5_5_K1.pdf}
        \includegraphics[width=.24\linewidth]{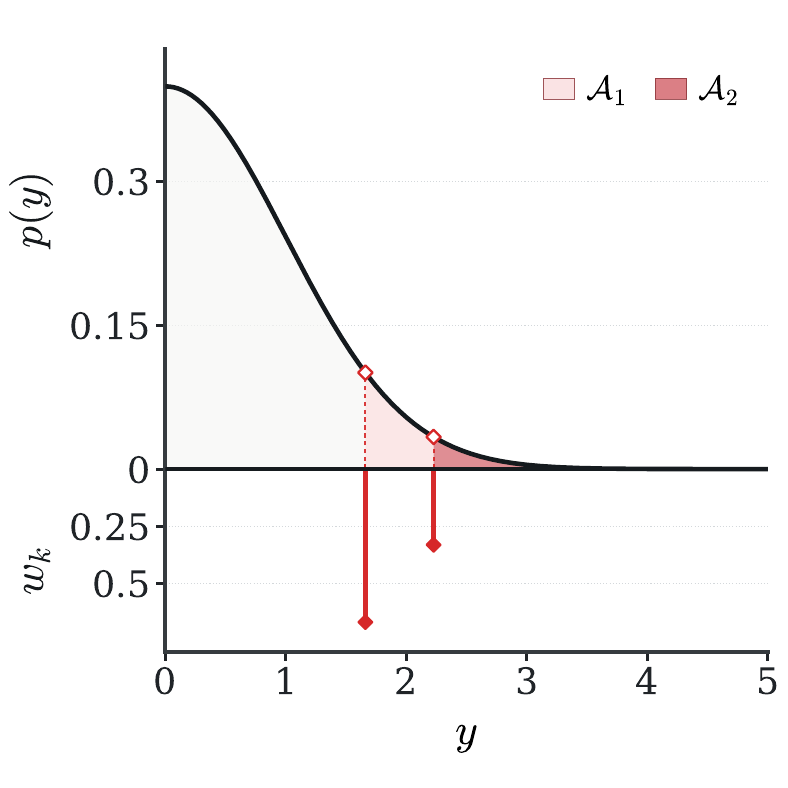}
        \includegraphics[width=.24\linewidth]{imgs/allocation_split_conformal_linear_truncated_normal_loc0_scale1_ym5_5_K4.pdf}
        \includegraphics[width=.24\linewidth]{imgs/allocation_split_conformal_linear_truncated_normal_loc0_scale1_ym5_5_K8.pdf}
        \caption{\texttt{Conformal-DRO} under Gaussian}
    \end{subfigure}
    \vfill
    \begin{subfigure}[t]{\linewidth}
        \centering
        \includegraphics[width=.24\linewidth]{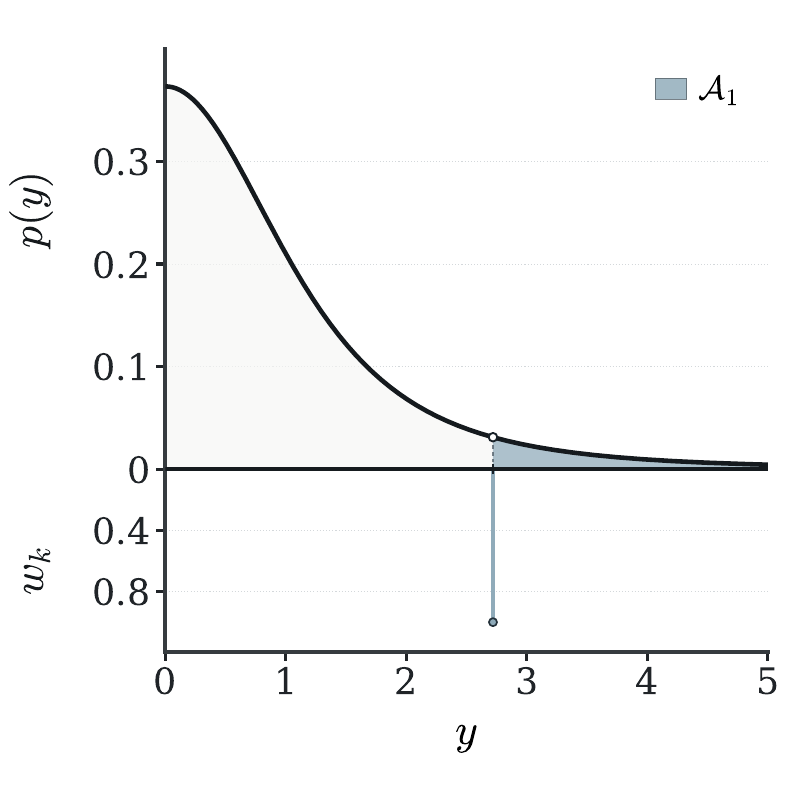}
        \includegraphics[width=.24\linewidth]{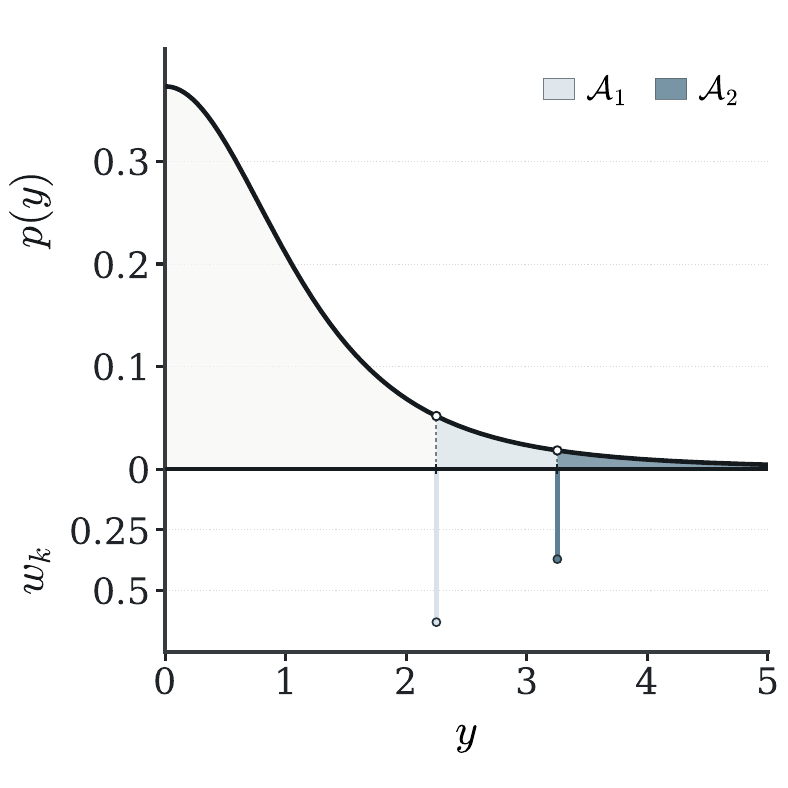}
        \includegraphics[width=.24\linewidth]{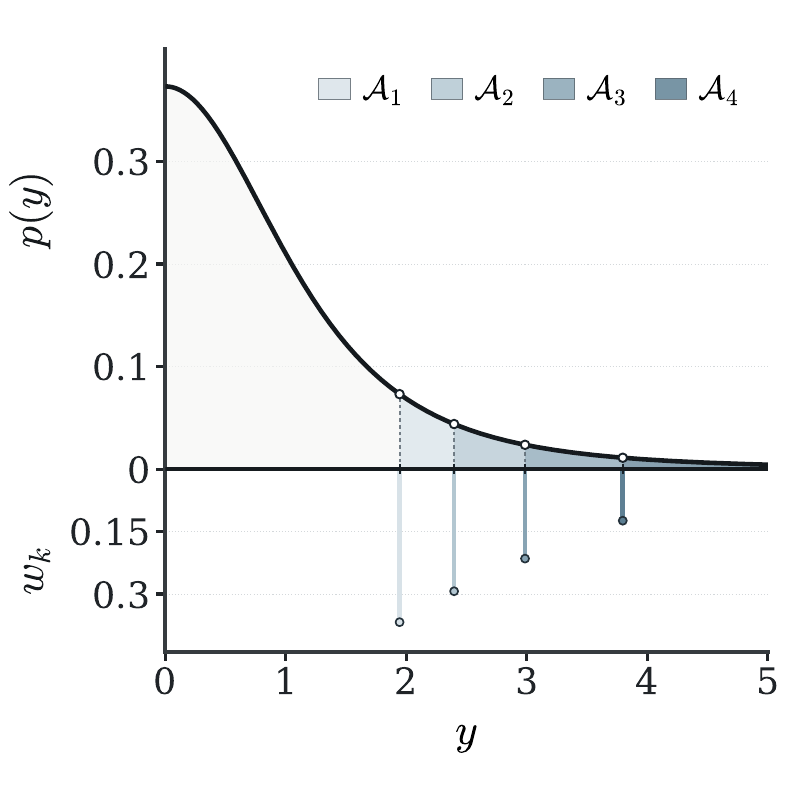}
        \includegraphics[width=.24\linewidth]{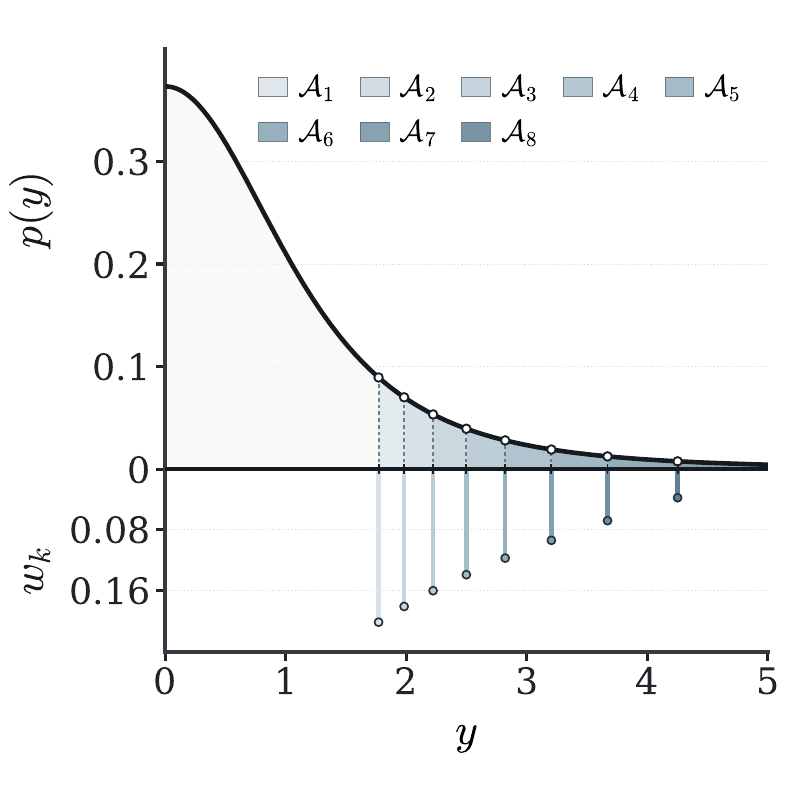}
        \caption{Population-oracle under Student-$t_3$}
    \end{subfigure}
    \vfill
    \begin{subfigure}[t]{\linewidth}
        \centering
        \includegraphics[width=.24\linewidth]{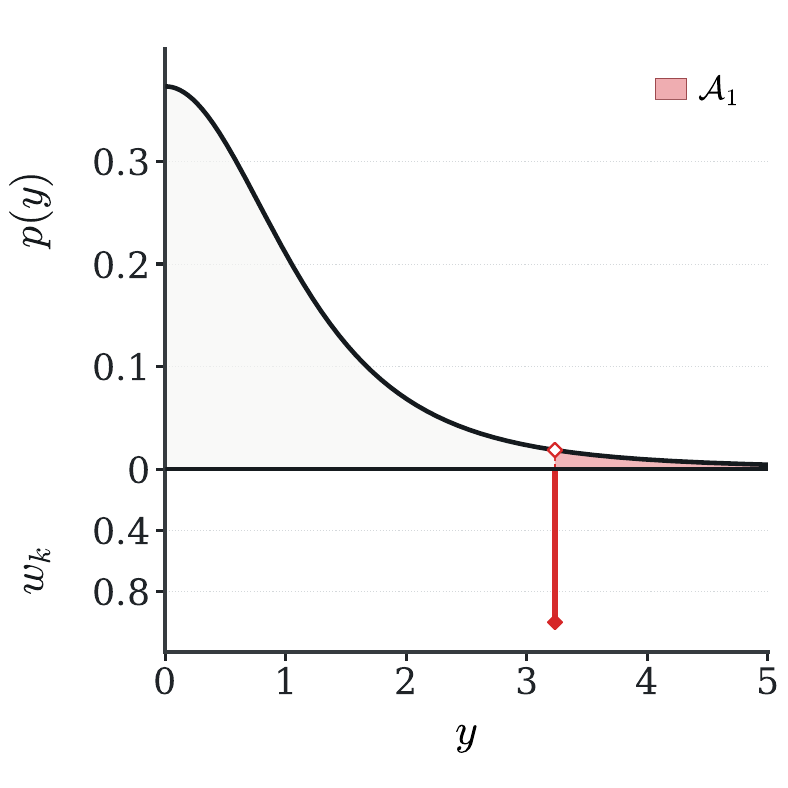}
        \includegraphics[width=.24\linewidth]{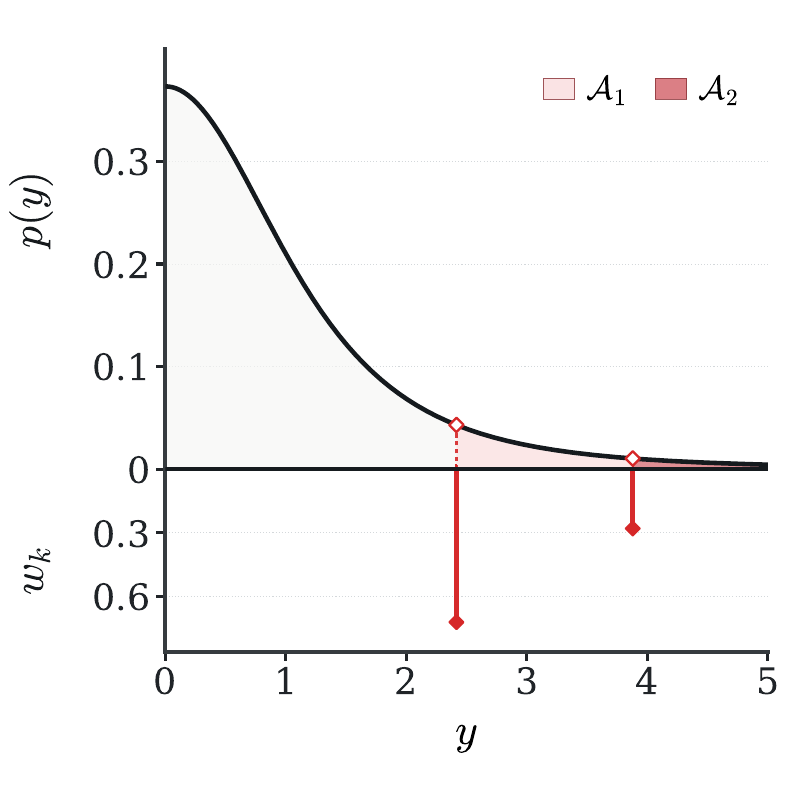}
        \includegraphics[width=.24\linewidth]{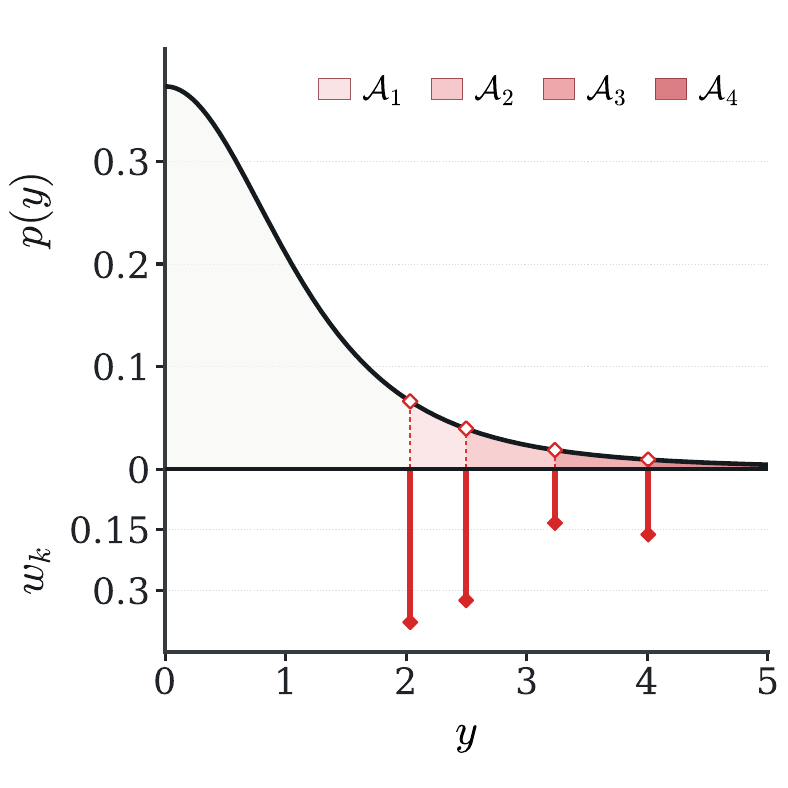}
        \includegraphics[width=.24\linewidth]{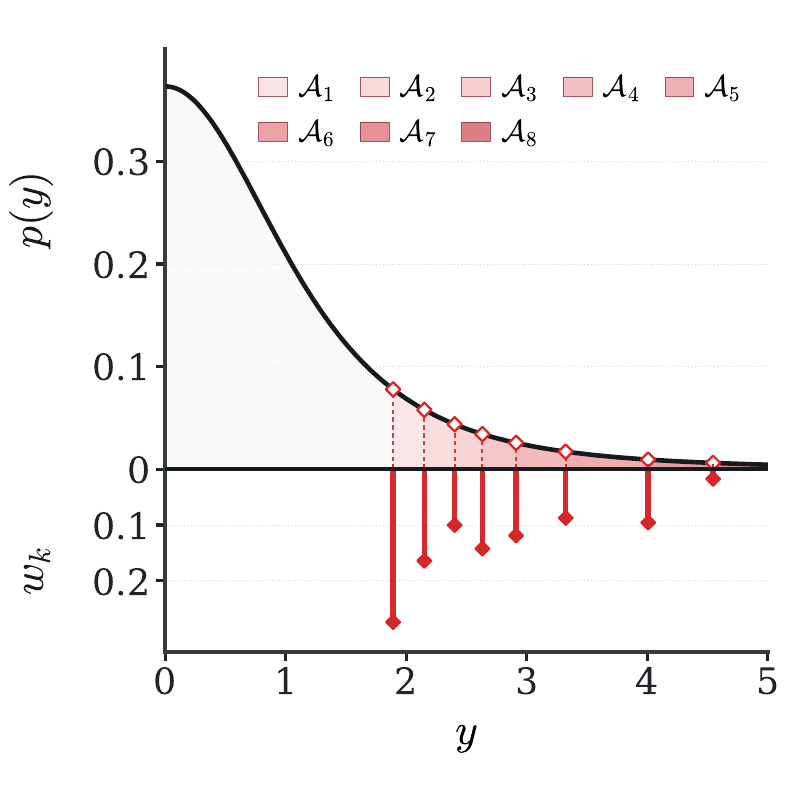}
        \caption{\texttt{Conformal-DRO} under Student-$t_3$}
    \end{subfigure}
    \caption{
    Population-oracle and \texttt{Conformal-DRO} shell allocations under the linear cost $c(z,y)=y^\top z$. 
    Black curves show the population densities, shaded regions the induced shells, dashed lines the optimized upper quantile endpoints, and downward stems the aggregation weights $w_k$. Only $y\geq0$ is shown; $\alpha=0.2$.
    }
    \label{fig:add-pop-syn-linear-shells}
    \vspace{-.1in}
\end{figure}

Figure~\ref{fig:add-pop-syn-linear-shells} compares the population-oracle and finite-sample \texttt{Conformal-DRO} allocations for $K\in\{1,2,4,8\}$. Under both distributions, \texttt{Conformal-DRO} recovers the oracle's qualitative structure: increasing $K$ partitions the upper tail into finer nested shells and distributes weight across more quantile boundaries, with greater emphasis on central and moderately extreme shells. The learned geometry also adapts to tail behavior: the Gaussian boundaries remain relatively concentrated, whereas the Student-$t_3$ boundaries extend farther into the tail because of its slower decay. Finite-sample calibration produces modest shifts in the selected endpoints and weights, particularly for extreme quantiles, but preserves their ordering and overall allocation profile. The main takeaway is that $K$ controls the resolution of the ambiguity geometry, while the outcome distribution determines where the shells are placed and how strongly they are weighted.

\section{Reproducibility Details for the Synthetic Experiments in Section~\ref{subsec:baseline-comparison}}
\label{app:baseline-comparison}

We study a two-product newsvendor problem in which an unobserved covariate changes the demand distribution. The purpose of the experiment is to compare decisions based on the observable mixture distribution with decisions that account for variation in the realized demand law. We consider both discrete and continuous demand models using the same data-generating mechanism and decision problem.

\paragraph{Data generating process.}
Let $X\in[-1,1]^2$ denote the observed context and $W\in\{0,1\}$ an unobserved event indicator:
\begin{equation}
\label{eq:experiment-random-law-model}
X\sim\operatorname{Unif}([-1,1]^2),
\qquad
W\mid X=x\sim\operatorname{Bernoulli}(p(x)),
\end{equation}
where
\[
p(x)
=
\sigma(-1.55+0.9x_1-0.55x_2),
\qquad
\sigma(u)=\frac{1}{1+e^{-u}}.
\]
The marginal event probability is approximately $19\%$. Conditional on $(X_i,W_i)=(x,w)$, the realized demand law is $Y_i\mid(X_i,\mathbb Q_i)\sim \mathbb Q_i=\mathbb Q_{x,w}$. Consequently, conditional on the recorded context alone, $\mathbb Q_i\mid X_i=x\sim\Pi_x\coloneqq [1-\pi(x)]\delta_{\mathbb Q_{x,0}}+\pi(x)\delta_{\mathbb Q_{x,1}}$, whereas the observable conditional response law is $\overline {\mathbb Q}_x=[1-\pi(x)]\mathbb Q_{x,0}+\pi(x)\mathbb Q_{x,1}$. Thus, repeated context-response observations identify the mixture law $\overline {\mathbb Q}_x$, while the law governing the next operating instance remains random because $W_i$ is unobserved.

For both experiments, define
\[
\mu_0(x)
=
\begin{pmatrix}
2.8+0.7x_1+0.3x_2\
3.0-0.4x_1+0.65x_2
\end{pmatrix},
\qquad
a=
\begin{pmatrix}
1.55\
-0.55
\end{pmatrix}.
\]
The vector $\mu_0(x)$ determines the baseline demand level, while $\delta a$ controls how the latent event shifts demand across the two products. We vary $\delta\in\{0,0.5,1,1.5,2\}$ to control latent-law heterogeneity.

In the \emph{discrete experiment}, demand takes values in
$\mathcal Y=\{0,\ldots,6\}^2$.
For a positive-definite matrix A, let
\[
G_{\mu,s}^{A}(y)
=
\frac{
\exp\!\left\{
-\frac{1}{2s^2}(y-\mu)^\top A^{-1}(y-\mu)
\right\}}{
\displaystyle
\sum_{v\in\mathcal Y}
\exp\!\left\{
-\frac{1}{2s^2}(v-\mu)^\top A^{-1}(v-\mu)
\right\}
},
\qquad y\in\mathcal Y,
\]
and set
$\Sigma=
\begin{pmatrix}
0.9^2 & 0.25\\
0.25 & 1
\end{pmatrix}$.
The two latent laws are
\begin{align*}
\mathbb Q_{x,0}
&=
G_{\mu_0(x),s_0(\delta)}^{\Sigma},\quad 
s_0(\delta)=
\max\{0.65,1-0.10\delta\},\\
\mathbb Q_{x,1}
&=
(1-\omega_\delta)
G_{\mu_0(x)+\delta a,s_1(\delta)}^{\Sigma}
+
\omega_\delta
G_{\mu_0(x)+2\delta a+(0.5,0)^\top,s_1(\delta)}^{\Sigma/0.45},
\end{align*}
where
$s_1(\delta)=1+0.55\delta,\ 
\omega_\delta=\min\{0.12\delta,0.28\}$.

In the \emph{continuous experiment}, demand takes values in $[0,6]^2$. Let
$\bar\sigma=(0.72,0.82)^\top,
\ 
s_0(\delta)=\max\{0.65,1-0.10\delta\},
\ 
s_1(\delta)=1+0.42\delta$.
We use truncated Gaussian laws
\begin{align*}
\mathbb Q_{x,0}=
\operatorname{TN}_{[0,6]^2}
\left(
\mu_0(x),
\operatorname{diag}\big((s_0(\delta)\bar\sigma)^2\big)
\right),
\quad
\mathbb Q_{x,1}=
\operatorname{TN}_{[0,6]^2}
\left(
\mu_0(x)+\delta a,
\operatorname{diag}\big((s_1(\delta)\bar\sigma)^2\big)
\right).
\end{align*}

\paragraph{Decision problem and methods.}
The decision is a two-product order quantity
\[
z=(z_1,z_2)\in
\mathcal Z
\coloneqq 
\left\{
z\in\{0,\ldots,6\}^2:
z_1+z_2\leq7
\right\}.
\]
The cost is
\[
c(z,y)
=
\sum_{j=1}^2
\left[
h_j(z_j-y_j)_+
+
b_j(y_j-z_j)_+
\right],
\qquad
h=(1,1.2),
\qquad
b=(6,5.5).
\]
The capacity constraint makes the allocation between the two products depend on the realized demand regime.

We compare the following methods: the observed-W oracle, which minimizes expected cost under the realized law $\mathbb Q_{x,W}$; the mixture stochastic program (Mixture SP), which minimizes expected cost under the exact mixture $\overline {\mathbb Q}_x$; mixture-centered Wasserstein DRO (WDRO); a single-level conformal ambiguity set; and sliced Conformal-DRO with support budgets $K=4$ and $K=8$.

All methods that use the mixture distribution are given the exact $\overline {\mathbb Q}_x$. In particular, the conformal score is
\[
s(x,y)=-\log \bar q_x(y),
\]
where $\bar q_x$ denotes the probability mass function in the discrete experiment and the density in the continuous experiment. This removes conditional-distribution estimation error from the comparison.

For conformal level $\gamma$, let
$\mathcal{C}_\gamma(x)
=
\left\{
y:
s(x,y)\leq\widehat q_{1-\gamma}
\right\}$.
Given an aggregation rule
$\nu=\sum_{k=1}^{K}w_k\delta_{\gamma_k}$,
the conformal ambiguity set is
the corresponding law-level ambiguity set is
\begin{equation}
\label{eq:experiment-conformal-ambiguity}
    \widehat{\mathfrak U}_{\alpha,\nu}(x)
    =
    \left\{
        \mathbb Q\in\mathcal P(\mathcal Y):
        \sum_{k=1}^{K}
        \frac{w_k}{\gamma_k}
        \left(1 - \mathbb Q\!\left(
            \mathcal C_{\gamma_k}(x)
        \right)\right)
        \leq \frac{1}{\alpha}
    \right\}.
\end{equation}
The deployed decision and its robust certificate are
\begin{align*}
    \widehat z_{\alpha,\nu}(x)
    &\in
    \arg\min_{z\in\mathcal Z}
    \sup_{\mathbb Q\in\widehat{\mathfrak U}_{\alpha,\nu}(x)}
    \sum_{y\in\mathcal Y}c(z,y)\mathbb Q(y),
    \\
    \widehat V_{\alpha,\nu}(x)
    &=
    \sup_{\mathbb Q\in\widehat{\mathfrak U}_{\alpha,\nu}(x)}
    \sum_{y\in\mathcal Y}
    c(\widehat z_{\alpha,\nu}(x),y)\mathbb Q(y).
\end{align*}
In the discrete experiment, all quantities are evaluated by finite enumeration. In the continuous experiment, expectations and law probabilities are approximated by deterministic tensor quadrature.

\paragraph{Benchmarks.}
We compare our method with the following benchmarks.

The \emph{observed-$W$ oracle} observes the omitted covariate and minimizes expected cost under $\mathbb Q_{x,W}$. The \emph{mixture stochastic program} knows the exact $\overline {\mathbb Q}_x$ and minimizes $\mathbb E_{\overline {\mathbb Q}_x}[c(z,Y)]$.
The \emph{mixture-centered WDRO} solves
\[
    \min_{z\in\mathcal Z}
    \sup_{\mathbb Q:W_1(\mathbb Q,\overline {\mathbb Q}_x)\leq\varepsilon}
    \mathbb E_{\mathbb Q}[c(z,Y)],
\]
where $W_1$ is induced by the $\ell_1$ ground metric on $\mathcal Y$ and $\varepsilon$ is selected on independent tuning data from $\{0,0.025,0.05,0.1,0.2,0.3,0.5,0.75,1\}$.
The finite-support WDRO problem is solved exactly through its piecewise linear dual \citep{zhang2024optimal}. The \emph{single-level conformal} benchmark restricts~\eqref{eq:experiment-conformal-ambiguity} to $K=1$. The proposed \emph{sliced Conformal-DRO} uses at most four conformal levels ($K=4$) or eight conformal levels ($K=8$).


For each value of $\delta$ and each repetition, we independently generate
tuning sample $\mathcal D_m$ of size $3{,}000$, a final
calibration sample $\mathcal D_n$ of size $1{,}000$, and $2{,}000$ independent
test instances. All samples follow the same
mechanism~\eqref{eq:experiment-random-law-model}.


At $\alpha=0.2$, the candidate conformal levels are
    $\Gamma=\{0.02,0.04,0.06,0.08,0.10,0.12,0.14,0.16,0.18,0.19\}.$ For the single-level method, we consider each $\gamma\in\Gamma$. For sliced Conformal-DRO, we consider combinations of up to K levels. 

To examine the effect of the law-level miscoverage parameter $\alpha$, we fix $\delta=2$ and repeat the experiment. We use
    $\alpha
    \in
    \{0.10,0.15,0.20,0.30,0.40,0.50,0.60\},
    \gamma
    \in
    \alpha\{0.2,0.3,0.4,0.5,0.6,0.8\}.$

\paragraph{Results.}
We evaluate each method by its expected cost under the realized law
\[
    L(X,W)
    \coloneqq 
    \int_{y\in\mathcal Y}
    c(z(X,W),y)\mathbb Q_{X,W}(y),
\]
We report
its mean, $90$th percentile, and $\operatorname{CVaR}_{0.9}$ across future
realized laws, together with mean regret relative to the observed-$W$ oracle
and the mean cost conditional on $W=1$.

For robust methods, we also report the mean robust certificate and certificate coverage:
\[
    \mathbb P\!\left\{
        \mathbb Q_{X,W}
        \in
        \widehat{\mathfrak U}_{\alpha,\nu}(X)
    \right\},
    \mathbb P\!\left\{
        L(X,W)\leq\widehat V(X)
    \right\}.
\]

\begin{table}[t]
\centering
\caption{Discrete-demand results at $\delta=2$ and $\alpha=0.2$. The same ten-level candidate grid is used for the single-level and sliced conformal methods.}
\label{tab:omitted-covariate-discrete}
\scriptsize
\setlength{\tabcolsep}{3.6pt}
\begin{tabular}{lccccc}
\toprule
\multicolumn{6}{c}{\textit{Panel A: Operational performance}}\\
\midrule
Method & Mean cost & P90 cost & $CVaR_{0.9}$ & Mean regret & Event cost\\
\midrule
Observed-$W$ oracle
& $3.408\,(0.034)$ & $6.849\,(0.054)$ & $7.379\,(0.048)$ & $0.000\,(0.000)$ & $6.934\,(0.035)$\\
Mixture SP
& $4.256\,(0.063)$ & $10.072\,(0.051)$ & $12.022\,(0.187)$ & $0.848\,(0.030)$ & $10.569\,(0.104)$\\
Mixture WDRO
& $4.262\,(0.057)$ & $10.020\,(0.056)$ & $11.864\,(0.173)$ & $0.853\,(0.024)$ & $10.409\,(0.144)$\\
\texttt{Conformal-DRO}, $K=1$
& $5.025\,(0.102)$ & $8.071\,(0.189)$ & $10.390\,(0.176)$ & $1.616\,(0.077)$ & $8.465\,(0.114)$\\
\texttt{Conformal-DRO}, $K=4$
& $4.746\,(0.103)$ & $8.106\,(0.344)$ & $10.903\,(0.146)$ & $1.338\,(0.076)$ & $8.790\,(0.096)$\\
\texttt{Conformal-DRO}, $K=8$
& $4.727\,(0.107)$ & $8.040\,(0.474)$ & $10.803\,(0.215)$ & $1.319\,(0.080)$ & $8.741\,(0.171)$\\
\bottomrule
\end{tabular}

\vspace{0.65em}

\begin{tabular}{lccc}
\toprule
\multicolumn{4}{c}{\textit{Panel B: Reliability and reported values}}\\
\midrule
Method & Mean certificate & Law coverage & Certificate coverage\\
\midrule
Mixture SP
& $4.205\,(0.009)$ & $0.000\,(0.000)^{\dagger}$ & $0.804\,(0.008)$\\
Mixture WDRO
& $7.206\,(1.442)$ & $0.345\,(0.232)$ & $0.849\,(0.044)$\\
\texttt{Conformal-DRO}, $K=1$
& $25.844\,(0.889)$ & $0.927\,(0.018)$ & $1.000\,(0.000)$\\
\texttt{Conformal-DRO}, $K=4$
& $21.162\,(0.954)$ & $0.946\,(0.004)$ & $1.000\,(0.000)$\\
\texttt{Conformal-DRO}, $K=8$
& $19.665\,(0.696)$ & $0.945\,(0.004)$ & $1.000\,(0.000)$\\
\bottomrule
\end{tabular}

\vspace{0.25em}
\begin{minipage}{0.95\linewidth}
\footnotesize
${}^{\dagger}$For Mixture SP, law coverage is a diagnostic quantity obtained by treating the exact mixture law as the singleton nominal set $\{\overline Q_x\}$. Mixture SP itself does not claim law-level coverage. Its ``certificate'' is the exact mixture objective, whereas the Mixture WDRO certificate is the Wasserstein-robust value.
\end{minipage}
\end{table}

\begin{table}[t]
\centering
\caption{Continuous truncated-Gaussian-mixture results at $\delta=2$ and $\alpha=0.2$. Expectations and law probabilities are evaluated by deterministic tensor quadrature.}
\label{tab:omitted-covariate-continuous}
\scriptsize
\setlength{\tabcolsep}{3.6pt}
\begin{tabular}{lccccc}
\toprule
\multicolumn{6}{c}{\textit{Panel A: Operational performance}}\\
\midrule
Method & Mean cost & P90 cost & $CVaR_{0.9}$ & Mean regret & Event cost\\
\midrule
Observed-$W$ oracle
& $3.060\,(0.019)$ & $5.477\,(0.038)$ & $6.318\,(0.052)$ & $0.000\,(0.000)$ & $5.665\,(0.032)$\\
Mixture SP
& $3.933\,(0.037)$ & $8.888\,(0.054)$ & $10.996\,(0.123)$ & $0.872\,(0.021)$ & $9.251\,(0.071)$\\
Mixture WDRO
& $3.935\,(0.036)$ & $8.831\,(0.078)$ & $10.802\,(0.217)$ & $0.874\,(0.020)$ & $9.112\,(0.132)$\\
\texttt{Conformal-DRO}, $K=1$
& $4.778\,(0.057)$ & $7.820\,(0.128)$ & $10.525\,(0.348)$ & $1.718\,(0.056)$ & $7.494\,(0.214)$\\
\texttt{Conformal-DRO}, $K=4$
& $4.519\,(0.071)$ & $7.533\,(0.079)$ & $10.021\,(0.314)$ & $1.459\,(0.068)$ & $7.474\,(0.158)$\\
\texttt{Conformal-DRO}, $K=8$
& $4.495\,(0.065)$ & $7.484\,(0.127)$ & $9.975\,(0.302)$ & $1.434\,(0.058)$ & $7.470\,(0.153)$\\
\bottomrule
\end{tabular}

\vspace{0.65em}

\begin{tabular}{lccc}
\toprule
\multicolumn{4}{c}{\textit{Panel B: Reliability and reported values}}\\
\midrule
Method & Mean certificate & Law coverage & Certificate coverage\\
\midrule
Mixture SP
& $3.893\,(0.008)$ & $0.000\,(0.000)^{\dagger}$ & $0.802\,(0.006)$\\
Mixture WDRO
& $6.962\,(1.864)$ & $0.406\,(0.274)$ & $0.879\,(0.056)$\\
\texttt{Conformal-DRO}, $K=1$
& $23.000\,(1.256)$ & $0.937\,(0.006)$ & $1.000\,(0.000)$\\
\texttt{Conformal-DRO}, $K=4$
& $21.428\,(0.686)$ & $0.944\,(0.004)$ & $1.000\,(0.000)$\\
\texttt{Conformal-DRO}, $K=8$
& $20.901\,(0.775)$ & $0.942\,(0.005)$ & $1.000\,(0.000)$\\
\bottomrule
\end{tabular}

\vspace{0.25em}
\begin{minipage}{0.95\linewidth}
\footnotesize
${}^{\dagger}$The Mixture SP coverage entry again treats $\{\overline {\mathbb Q}_x\}$ as a diagnostic singleton nominal set. For Mixture SP and Mixture WDRO, certificate coverage is diagnostic; the formal realized-law guarantee applies only to the conformal ambiguity sets.
\end{minipage}
\end{table}

To examine the protection--efficiency role of $\alpha$, we fix the discrete design at $\delta=2$, use $K=4$, and repeat the full tuning--calibration--testing procedure at $\alpha=0.10,0.15,0.20,0.30,0.40,0.50$, and $0.60$. At each value of $\alpha$, the candidate levels equal $\alpha$ times the ten fractions $0.10,0.20,\ldots,0.90$, and $0.95$, so the relative contour resolution remains fixed. In particular, the grid at $\alpha=0.2$ is the one used in Table~\ref{tab:omitted-covariate-discrete}. Parentheses in all tables contain Student-$t$ $95\%$ confidence-interval half-widths across the ten repetitions.

\begin{table}[t]
\centering
\caption{Protection--efficiency for sliced Conformal-DRO with $K=4$ in the discrete-demand experiment at $\delta=2$. Mixture SP is included as the oracle benchmark for mean performance.}
\label{tab:omitted-covariate-alpha-frontier}
\scriptsize
\setlength{\tabcolsep}{3.8pt}
\begin{tabular}{lccccc}
\toprule
Method / $\alpha$ & Mean cost & P90 cost & $CVaR_{0.9}$ & Law coverage & Mean certificate\\
\midrule
Sliced / $0.10$ & $5.292\,(0.099)$ & $7.740\,(0.166)$ & $9.788\,(0.142)$ & $0.979\,(0.003)$ & $22.544\,(0.657)$\\
Sliced / $0.15$ & $4.987\,(0.061)$ & $7.506\,(0.115)$ & $10.196\,(0.289)$ & $0.968\,(0.008)$ & $21.593\,(0.835)$\\
Sliced / $0.20$ & $4.746\,(0.103)$ & $8.106\,(0.344)$ & $10.903\,(0.146)$ & $0.946\,(0.004)$ & $21.162\,(0.954)$\\
Sliced / $0.30$ & $4.354\,(0.072)$ & $9.798\,(0.122)$ & $11.514\,(0.249)$ & $0.927\,(0.010)$ & $20.484\,(0.854)$\\
Sliced / $0.40$ & $4.325\,(0.067)$ & $9.954\,(0.058)$ & $11.746\,(0.297)$ & $0.845\,(0.020)$ & $20.080\,(2.320)$\\
Sliced / $0.50$ & $4.339\,(0.080)$ & $9.938\,(0.108)$ & $12.021\,(0.166)$ & $0.824\,(0.011)$ & $16.717\,(1.241)$\\
Sliced / $0.60$ & $4.329\,(0.080)$ & $9.888\,(0.215)$ & $12.142\,(0.368)$ & $0.815\,(0.011)$ & $15.196\,(1.366)$\\
\midrule
Mixture SP & $4.256\,(0.063)$ & $10.072\,(0.051)$ & $12.022\,(0.187)$ & -- & $4.205\,(0.009)$\\
\bottomrule
\end{tabular}
\end{table}

\paragraph{Observations.}
The negative controls first verify that the comparison is driven by latent-law heterogeneity rather than by an intrinsic advantage of the conformal formulation. When $\delta=0$, $\mathbb Q_{x,0}=\mathbb Q_{x,1}=\overline {\mathbb Q}_x$, observing $W$ has no value, and the random-law hierarchy collapses. In the discrete experiment, the observed-$W$ oracle and Mixture SP both have mean cost $3.367\,(0.006)$; the corresponding values are $3.373\,(0.009)$ for Mixture WDRO, $3.416\,(0.010)$ for the single-level method, $3.399\,(0.012)$ for $K=4$, and $3.394\,(0.011)$ for $K=8$. In the continuous experiment, the oracle and Mixture SP both have mean cost $2.995\,(0.006)$, compared with $3.000\,(0.009)$ for Mixture WDRO, $3.012\,(0.006)$ for the single-level method, and $3.016\,(0.009)$ for either sliced specification. The performance distinctions therefore emerge when the omitted covariate changes the response law and largely disappear when it does not.

Under strong heterogeneity, the two outcome models deliver the same central conclusion. Mixture SP is mean-optimal among policies measurable with respect to the recorded context, but this average objective masks substantial variation across future realized laws. At $\delta=2$, sliced Conformal-DRO with $K=4$ incurs an $11.5\%$ mean-cost premium over Mixture SP in the discrete experiment and a $14.9\%$ premium in the continuous experiment. In return, it lowers P90 realized-law cost by $19.5\%$ and $15.2\%$, $CVaR_{0.9}$ by $9.3\%$ and $8.9\%$, and event-regime cost by $16.8\%$ and $19.2\%$, respectively. Since Mixture SP and Mixture WDRO receive the exact $\overline {\mathbb Q}_x$, these gaps reflect the distinction between the mixture target and the future realized-law target rather than conditional-distribution estimation error.

The law-coverage diagnostics reinforce this interpretation. At $\delta=2$, the singleton mixture nominal set contains none of the future realized laws in either experiment. The mean-cost-tuned Wasserstein balls contain $34.5\%$ of realized laws in the discrete design and $40.6\%$ in the continuous design, with substantial variation across repetitions. By contrast, the conformal methods attain approximately $94\%$--$96\%$ law coverage and $100\%$ certificate coverage. These empirical coverages exceed the nominal level $1-\alpha=80\%$, revealing conservatism in the response-to-law conversion. Slicing should therefore be interpreted as reducing the operational price of law-level protection, rather than as creating additional coverage for its own sake.

Relative to a single conformal level, slicing consistently reduces average cost and certificate size, although its tail effect differs across the two outcome models. With $K=4$, mean cost falls by $5.5\%$ in the discrete experiment and $5.4\%$ in the continuous experiment; the corresponding mean certificates fall by $18.1\%$ and $6.8\%$. In the continuous experiment, slicing also lowers P90 and $CVaR_{0.9}$ while leaving event cost essentially unchanged. In the discrete experiment, P90 is essentially unchanged, whereas $CVaR_{0.9}$ and event cost increase moderately as the sliced rule shifts capacity back toward the more common quiet regime; law coverage decreases from $0.960$ to $0.946$ but remains well above the nominal target. Thus, the evidence supports a reduction in the average and certificate cost of law-level protection, not uniform dominance in every tail statistic.

Increasing the support budget from $K=4$ to $K=8$ produces only small additional changes in realized operational performance. The paired $K=8$ minus $K=4$ differences in mean cost are $-0.019\,(0.047)$ and $-0.024\,(0.033)$ in the discrete and continuous experiments, respectively; the corresponding P90 differences are $-0.066\,(0.536)$ and $-0.049\,(0.096)$. Law coverage is virtually unchanged. The clearer remaining benefit is certificate refinement: the mean certificate decreases by $7.1\%$ in the discrete experiment and $2.5\%$ in the continuous experiment. Four levels therefore capture most of the operational value of the conformal path in these designs, while additional levels primarily refine the ambiguity geometry.

Finally, Table~\ref{tab:omitted-covariate-alpha-frontier} shows that the conservatism is tunable through $\alpha$. Increasing $\alpha$ moves the decision toward mixture-optimal mean performance while gradually giving up tail protection and law coverage. At $\alpha=0.30$, for example, sliced Conformal-DRO has mean cost $4.354$, only $2.3\%$ above Mixture SP, while its P90 and $CVaR_{0.9}$ remain $2.7\%$ and $4.2\%$ lower; empirical law coverage is $0.927$. The frontier is not exactly monotone because the action set is discrete and the aggregation rule is re-tuned at each value of $\alpha$, but it clearly exhibits the intended protection--efficiency tradeoff.

\end{document}